\theoremstyle{plain}
\newtheorem{defi}{Definition}[section]
\newtheorem{thm}{Theorem}[section]
\newtheorem{prop}{Proposition}[section]
\newtheorem{lem}{Lemma}[section]
\newtheorem{coro}{Corollary}[section]
\theoremstyle{remark}
\DeclareMathOperator{\Gal}{Gal}
\DeclareMathOperator{\Aut}{Aut}
\DeclareMathOperator{\Jac}{Jac}
\DeclareMathOperator{\Pic}{Pic}
\DeclareMathOperator{\Spec}{Spec}
\DeclareMathOperator{\Syl}{Syl}
\DeclareMathOperator{\sw}{sw}
\DeclareMathOperator{\Sw}{Sw}
\DeclareMathOperator{\Sl}{Sl}
\DeclareMathOperator{\Hom}{Hom}
\DeclareMathOperator{\codim}{codim}
\DeclareMathOperator{\Id}{Id}
\DeclareMathOperator{\disc}{disc}
\DeclareMathOperator{\Ker}{Ker}
\newcommand{\N}{\mathbb{N}}
\newcommand{\M}{M}
\newcommand{\Z}{\mathbb{Z}}
\newcommand{\R}{\mathbb{R}}
\newcommand{\Q}{\mathbb{Q}}
\newcommand{\F}{\mathbb{F}}
\newcommand{\Qpur}{\mathbb{Q}_p^{\rm ur}}
\newcommand{\Proj}{\mathbb{P}^1}
\renewcommand{\L}{L}
\newcounter{mysub}
\begin{document}
\title{Maximal wild monodromy in unequal characteristic}
\author{P. Chrétien \and M. Matignon}
\maketitle
\begin{abstract}
Let $R$ be a complete discrete valuation ring of mixed characteristic $(0,p)$ with fraction field $K$. We study stable models of $p$-cyclic covers of $\Proj_K$. First, we determine the monodromy extension, the monodromy group, its filtration and the Swan conductor for special covers of arbitrarily high genus with potential good reduction. In the case $p=2$ we consider hyperelliptic curves of genus $2$.
\end{abstract}
\section{Introduction}
\label{sec:0}
Let $(R,v)$ be a complete discrete valuation ring of mixed characteristic $(0,p)$ with fraction field $K$ containing a primitve $p$-th root of unity $\zeta_p$ and algebraically closed residue field $k$. The stable reduction theorem states that given a smooth, projective, geometrically connected curve $C/K$ of genus $g(C) \geq 2$, there exists a unique minimal Galois extension $\M/K$ called \textsl{the monodromy extension of $C/K$} such that $C_{\M}:=C \times \M$ has stable reduction over $\M$. The group $G =  \Gal(\M/K)$ is the \textsl{monodromy group of $C/K$}.
In a previous paper, Lehr and Matignon \cite{LM2} gave an algorithm to determine the stable reduction of $p$-cyclic covers of $\Proj_K$ under the extra assumption of \textsl{equidistant geometry}  of the branch locus and obtain information about the monodromy extension $\M/K$ of $C/K$. This makes effective a theorem of Raynaud \cite{Ra} in the case of $p$-cyclic covers of $\Proj_K$. The present article studies examples of such $p$-cyclic covers but is independent of their work and develops specific methods to treat our special covers. \\
\indent Let $\mathcal{C}$ be the stable model of $C_{\M}/\M$ and $\Aut_k(\mathcal{C}_k)^{\#}$ the subgroup of $\Aut_k(\mathcal{C}_k)$ of elements acting trivially on the reduction in $\mathcal{C}_k$ of the ramification locus of $C_{\M}\to \Proj_{\M}$ (see \cite{Liu} 10.1.3 for the definition of the reduction map of $C_{\M}$). One derives from the stable reduction theorem the following injection :
\begin{equation}\label{injintro}
\Gal(\M/K) \hookrightarrow \Aut_k(\mathcal{C}_k)^{\#}.
\end{equation}
When the $p$-Sylow subgroups of these groups are isomorphic, one says that the \textsl{wild monodromy is maximal}. We are interested in realization of covers such that the $p$-adic valuation of $|\Aut_k(\mathcal{C}_k)^{\#}|$ is large and having maximal wild monodromy, we will study ramification filtrations and Swan conductors of their monodromy extensions.\\

In section \ref{sec:2}, we consider examples of covers of arbitrarily high genus having potential good reduction. Let $n \in \N^{\times}$, $q=p^n$, $\lambda= \zeta_p-1$ and ${K=\Qpur (\lambda^{1/(1+q)})}$. We study covers $C_c/K$ of $\Proj_K$ defined by $Y^p=1+cX^q+X^{1+q}$ with $c \in R$, $ v(\lambda^{p/(1+q)}) > v(c)$ and $v(c^{p}-c) \geq v(p)$.
 
\begin{thm}\label{prop1intro}
The stable reduction $\mathcal{C}_k/k$ is canonically a $p$-cyclic cover of $\Proj_k$. It is smooth, ramified at one point $\infty$ and étale outside $\infty$. The ramification locus of $C_{\M} \to \Proj_{\M}$ reduces in $\infty$ and the group $\Aut_k(\mathcal{C}_k)^{\#}$ has a unique $p$-Sylow subgroup $\Aut_k(\mathcal{C}_k)^{\#}_1$. Moreover, the curve $C_c/K$ has maximal wild monodromy $\M/K$. The extension $\M/K$ is the decomposition field of an explicitly given polynomial and $\Gal(\M/K) \simeq \Aut_k(\mathcal{C}_k)^{\#}_1$ is an extra-special $p$-group of order $pq^2$.\end{thm}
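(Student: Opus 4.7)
The proof splits into three tasks: (i) compute the stable reduction $\mathcal{C}_k/k$ explicitly, (ii) determine $\Aut_k(\mathcal{C}_k)^{\#}$ from the reduction, (iii) lift enough $\bar K$-automorphisms of $C_c$ to match the bound \eqref{injintro}, identifying the field of definition.

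First I would search for the stable model by a change of coordinates. Since the equation $Y^p = 1 + cX^q + X^{1+q}$ degenerates to $Y^p = 1$ modulo the maximal ideal, I would rescale $X = \lambda^{1/(1+q)} X_1$ so that $X^{1+q}$ has valuation $v(\lambda)$ and $cX^q$ has strictly larger valuation (using $v(c) < v(\lambda^{p/(1+q)})$), and set $Y = 1 + W$ with $W$ of valuation $v(\lambda)/p$. The binomial expansion $(1+W)^p - 1 = pW + \cdots + W^p$ matches the right-hand side after absorbing cross terms into Artin--Schreier equivalents, where the hypothesis $v(c^p - c) \geq v(p)$ is precisely what allows the term $c X^q$ to be swept into an Artin--Schreier equivalent lower-order piece of $W^p - W$. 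The resulting mod-$\mathfrak{m}$ reduction should yield an equation $w^p - w = x_1^{1+q} + (\text{terms of lower degree prime to } p)$, a smooth Artin--Schreier curve ramified only at $\infty$. A Riemann--Hurwitz check gives genus $q(p-1)/2$, matching $g(C_c)$, which together with the uniqueness of the stable model confirms that this is indeed $\mathcal{C}_k$, and that $C_c/K$ has potential good reduction; the ramification locus of $C_{\M}\to \Proj_{\M}$ (a single Weierstrass-like fibre above $\infty\in \Proj_K$) then visibly specializes to the unique ramification point.

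Next, the reduction being an Artin--Schreier cover with a single wildly ramified point, I would invoke the classical description (Stichtenoth, and the equidistant branch analysis of \cite{LM2}): the subgroup of $\Aut_k(\mathcal{C}_k)$ fixing $\infty$ consists of pairs $(x_1,w)\mapsto (x_1+a,\,w+b(x_1))$ where $a$ ranges over an $\F_q$-vector space cut out by an additive polynomial and $b$ is a compatible Artin--Schreier correction. For the normal form $w^p - w = x_1^{1+q}$, this is the standard extra-special $p$-group of order $pq^2$, with center $\langle w\mapsto w+1\rangle$ and quotient $\F_q \oplus \F_q$ acting by translations of $(x_1,w)$; uniqueness of the $p$-Sylow is automatic as the prime-to-$p$ part acts freely on the branch divisor and is therefore killed by the ``$\#$''. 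To realise this group in characteristic zero, I would write down the analogous substitution $X\mapsto X+a$, $Y\mapsto \zeta_p^i\,Y\,\mu_a(X)$ and require it to preserve the defining equation of $C_c$; this yields a polynomial identity which, after normalisation, collapses to a single additive polynomial $P(T)\in K[T]$ of degree $q$ in $a$. Adjoining its roots together with $\zeta_p$ produces $pq^2$ automorphisms defined over the splitting field $\M$ of $P(T)$ (times $T^p-T+\mathrm{const}$ for the center), providing the sought lower bound on the monodromy group.

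The main obstacle is the identification step (iii). Bound \eqref{injintro} forces $|G|\leq pq^2$; the construction of $pq^2$ automorphisms defined over $\M$ forces equality, provided one verifies that distinct roots of $P(T)$ give automorphisms whose reductions are distinct elements of $\Aut_k(\mathcal{C}_k)^{\#}_1$ (which amounts to showing that no nontrivial lift reduces to the identity, checkable by tracking the leading $X_1$-coefficient of the substitution on the integral model). The extra-special structure then transfers from $\Aut_k(\mathcal{C}_k)^{\#}_1$ to $\Gal(\M/K)$ via the resulting isomorphism. Minimality of $\M$ — i.e.\ that no proper subfield suffices — follows from the observation that any subgroup of $\Gal(\M/K)$ acts nontrivially on the stable model (since the $\#$-subgroup contains the image), forcing the fixed field to be strictly smaller than $\M$ if a root of $P$ is omitted. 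The only computation that cannot be avoided is the explicit derivation of $P(T)$ from the substitution $X\mapsto X+a$, but this is a routine expansion in $a$ once the change of variables producing $\mathcal{C}_k$ has been set up correctly.
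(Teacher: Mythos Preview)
Your plan has a genuine gap at the very first step. Under the hypotheses of the theorem one has $v(c)=0$ (the paper notes this: $v(c^p-c)\geq v(p)$ together with $v(c)<v(\lambda^{p/(1+q)})$ forces $c$ to be a unit). Hence after any pure rescaling $X=\rho X_1$ the term $cX^q=c\rho^q X_1^q$ has \emph{smaller} Gauss valuation than $\rho^{1+q}X_1^{1+q}$, not larger; your reduction would be governed by $w^p-w=\bar c\,x_1^{q}$ (or worse, $w^p=\cdots$ with the wrong scaling $\lambda^{1/(1+q)}$), which is not the smooth genus $q(p-1)/2$ curve you need. The paper's key move, which your outline misses entirely, is to \emph{shift} rather than merely rescale: one introduces the degree-$q^2$ polynomial $L_c(X)=X^{q^2}-a_n(c+X)f_{q,c}(X)^{q-1}$, picks a root $y$, and sets $X=y+\lambda^{p/(1+q)}T$. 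An inductive computation (the paper's Steps II--IV) then shows that, over $K(y,f_{q,c}(y)^{1/p})$, the equation becomes $w^p-w=t^{1+q}$ in reduction. The ``explicitly given polynomial'' of the statement is this $L_c$, not an additive polynomial of degree $q$.

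Your step (iii) is also off target. The extra-special group of order $pq^2$ is \emph{not} realised as a subgroup of $\Aut_{\bar K}(C_c)$: in characteristic $0$ the curve $Y^p=1+cX^q+X^{1+q}$ has far too few automorphisms (essentially only $Y\mapsto\zeta_p Y$), so your substitution $X\mapsto X+a$, $Y\mapsto \zeta_p^iY\mu_a(X)$ cannot preserve the equation for $pq^2$ values of $(a,i)$. The monodromy group arises instead as the Galois group of $\M=K(y,f_{q,c}(y)^{1/p})$ over $K$, acting on the stable model by permuting the $q^2$ roots $y_i$ of $L_c$; the paper shows (Step V) that $v(y_i-y_j)=v(\lambda^{p/(1+q)})$ exactly, so each nontrivial Galois element induces a nontrivial translation $t\mapsto t+\overline{(y_j-y_i)/\lambda^{p/(1+q)}}$ on $\mathcal{C}_k$, and then (Step VI) that this forces the injection \eqref{inj} to be onto the $p$-Sylow. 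Irreducibility of $L_c$ over $K$ (Step A) is what makes the Galois action transitive and is where the hypothesis $v(c^p-c)\geq v(p)$ is actually used.
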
 

\indent Let $X/k$ be a $p$-cyclic cover of $\Proj_k$ of genus $g(X)$, ramified at one point $\infty$ and étale outside $\infty$. According to \cite{LM}, the $p$-Sylow subgroup $G_{\infty,1}(X)$ of the subgroup of $\Aut_k(X)$ of automorphisms leaving $\infty$ fixed satisfies $|G_{\infty,1}(X)| \leq \frac{4p}{(p-1)^ 2}g(X)^ 2$. The stable reduction $\mathcal{C}_k/k$ of Theorem \ref{prop1intro} is such that $G_{\infty,1}(\mathcal{C}_k)=\Aut_k(\mathcal{C}_k)^{\#}_1$ and $|G_{\infty,1}(\mathcal{C}_k)|=\frac{4p}{(p-1)^ 2}g(\mathcal{C}_k)^ 2$. So we obtain the largest possible maximal wild monodromy for curves over some finite extension of $\Qpur$ with genus in $\frac{p-1}{2}p^{\N}$ in the good reduction case.\\
\indent The group $G_{\infty,1}(\mathcal{C}_k)=\Aut_k(\mathcal{C}_k)^{\#}_1$ is endowed with the ramification filtration $(G_{\infty,i}(\mathcal{C}_k))_{i\geq 0}$  which is easily seen to be :
\[ G_{\infty,0}(\mathcal{C}_k) = G_{\infty,1}(\mathcal{C}_k)\supsetneq {\rm Z}(G_{\infty,0}(\mathcal{C}_k))= G_{\infty,2}(\mathcal{C}_k) = \dots = G_{\infty,1+q}(\mathcal{C}_k) \supsetneq \lbrace 1 \rbrace. \]
Moreover, $G:=\Gal(\M/K)$ being the Galois group of a finite extension of $\Qpur$, it is endowed with the ramification filtration $(G_i)_{i \geq 0}$ of an arithmetic nature. Since $G \simeq G_{\infty,1}(\mathcal{C}_k)$ it is natural to ask for the behaviour of $(G_i)_{i \geq 0}$ under \eqref{injintro}, that is to compare $(G_i)_{i \geq 0}$ and  $(G_{\infty,i}(\mathcal{C}_k))_{i\geq 0}$. One shows that they actually coincide and we compute the conductor exponent $f(\Jac(C_c)/K)$ of $\Jac(C_c)/K$ and its Swan conductor $\sw(\Jac(C_c)/K)$ :

\begin{thm}
Under the hypotheses of Theorem \ref{prop1intro}, the lower ramification filtration of $G$ is :
\[ G= G_0 = G_1 \supsetneq   {\rm Z}(G)= G_2 = \dots = G_{1+q} \supsetneq \lbrace 1 \rbrace. \]
Then, $f(\Jac(C_c)/K)=(2q+1)(p-1)$ and, in the case where $c \in \Qpur$, $\sw(\Jac(C_c)/\Qpur)=1$.
\end{thm}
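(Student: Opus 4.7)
The plan is to prove each of the three assertions directly, leveraging the explicit description of $M/K$ from Theorem~\ref{prop1intro}.

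\textbf{Stage 1 (the filtration).} Since $G \simeq \Aut_k(\mathcal{C}_k)^{\#}_1$ is extra-special of order $pq^2$ with centre $Z(G)$ of order $p$ and elementary abelian quotient $G/Z(G)$ of order $q^2$, I would work in the tower
\[
K \;\subset\; L := M^{Z(G)} \;\subset\; M,
\]
in which $L/K$ is elementary abelian of degree $q^2$ and $M/L$ is cyclic of degree $p$. From the polynomial exhibited in Theorem~\ref{prop1intro} I would construct a uniformizer $\pi_M$ of $M$ and compute $v_M\bigl(\sigma(\pi_M) - \pi_M\bigr)$ for $\sigma$ running over a generating set of $G$. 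The equality $G_0 = G_1 = G$ is immediate because $|G|$ is a $p$-power and the residue field of $K$ is algebraically closed. For the two breaks one aims to show: elements $\sigma \in G \setminus Z(G)$ satisfy $v_M\bigl(\sigma(\pi_M) - \pi_M\bigr) = 2$ (so $G_2 = Z(G)$), while nontrivial elements $\sigma \in Z(G)$ satisfy $v_M\bigl(\sigma(\pi_M) - \pi_M\bigr) = 2+q$ (so $G_2 = \cdots = G_{1+q} = Z(G)$ and $G_{2+q} = \{1\}$).

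\textbf{Stage 2 (the conductor).} Denote $J := \Jac(C_c)$. The Artin formula reads
\[
f(J/K) = \sum_{i \geq 0} \frac{|G_i|}{|G_0|}\,\codim_{\mathbb{Q}_\ell} (V_\ell J)^{G_i}, \qquad \ell \neq p.
\]
Good reduction of $J$ over $M$ identifies $V_\ell J$ with $V_\ell \Jac(\mathcal{C}_k)$ as $G$-modules, so $\dim (V_\ell J)^H = 2g(\mathcal{C}_k/H)$ for any $H \leq G$. The key input from Theorem~\ref{prop1intro} is that $Z(G)$ is the Galois group of the canonical $p$-cyclic cover $\mathcal{C}_k \to \Proj_k$, whence $\mathcal{C}_k/Z(G) = \Proj_k$ and therefore $\mathcal{C}_k/G = \Proj_k/(G/Z(G)) = \Proj_k$. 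Both $(V_\ell J)^{Z(G)}$ and $(V_\ell J)^G$ vanish, and $\dim V_\ell J = 2g(\mathcal{C}_k) = q(p-1)$. Plugging in the filtration from Stage~1 gives
\[
f(J/K) = 2 \cdot q(p-1) + q \cdot \tfrac{1}{q^2} \cdot q(p-1) = (2q+1)(p-1).
\]

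\textbf{Stage 3 (Swan over $\Qpur$).} Assume $c \in \Qpur$. Then $V_\ell J$ is a representation of $\Gal(\overline{\Qpur}/\Qpur)$; I compute its Swan conductor through $G' := \Gal(M/\Qpur)$, which contains $G$ as a normal subgroup with tame quotient $\Gal(K/\Qpur)$ of order $e = (p-1)(q+1)$. Tameness forces $G$ to be the wild inertia of $G'$, and compatibility of lower numbering with subgroups yields $G'_i = G_i$ for $i \geq 1$, with $|G'_0| = e \cdot pq^2$. The Swan formula gives
\[
\sw(J/\Qpur) = \sum_{i \geq 1} \frac{|G'_i|}{|G'_0|}\,\codim (V_\ell J)^{G'_i} = \frac{1}{e}\Bigl( q(p-1) + q \cdot \tfrac{1}{q^2} \cdot q(p-1) \Bigr) = \frac{(q+1)(p-1)}{e} = 1.
\]

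\textbf{Main difficulty.} The bulk of the work lies in Stage~1, namely in extracting a workable uniformizer $\pi_M$ from the polynomial of Theorem~\ref{prop1intro} and determining the valuations of $\sigma(\pi_M) - \pi_M$ for each class of element in $G$. Once the two breaks are established, Stages~2 and~3 reduce to bookkeeping with the Artin and Swan formulas, driven by the two vanishings $(V_\ell J)^G = (V_\ell J)^{Z(G)} = 0$ produced by the cover-quotient structure of $\mathcal{C}_k$.
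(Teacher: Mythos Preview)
Your outline is correct, and Stages~2 and~3 coincide with the paper's argument: the paper likewise uses the identification $\Jac(C_c)[\ell] \simeq \Jac(\mathcal{C}_k)[\ell]$ together with $\mathcal{C}_k/{\rm Z}(G) \simeq \Proj_k$ (hence vanishing invariants for every nontrivial $G_i$) to evaluate the conductor, and for the Swan conductor over $\Qpur$ it simply says ``an easy computation'', which your Stage~3 carries out correctly.

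The one substantive difference is in Stage~1. You propose to construct a uniformizer $\pi_M$ of $M$ and read off all of $(G_i)_i$ from $v_M(\sigma(\pi_M)-\pi_M)$ directly. The paper never produces $\pi_M$. It uses the same tower $K \subset L = M^{{\rm Z}(G)} \subset M$, but treats the two layers by different devices: for $\Gal(L/K) = G/{\rm Z}(G)$ it exhibits an explicit uniformizer $z$ of $L$ (built from a root $y$ of the modified monodromy polynomial) and checks $v_L(\sigma z - z) = 2$ for every nontrivial $\sigma$, giving $(G/{\rm Z}(G))_2=\{1\}$; for $\Gal(M/L) = {\rm Z}(G)$ it avoids uniformizers altogether, instead rewriting $f_{q,c}(y)$, up to a $p$-th power in $L$, in Kummer form $1 + u\pi_L^s$ with $s = (q+1)(pq^2-1)$ prime to $p$, and then reading the unique break $q+1$ off the different via Lemma~\ref{hyodo}. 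The two filtrations are then spliced using the standard compatibilities of lower numbering with subgroups and quotients. This buys a great deal: an explicit $\pi_M$ for a non-abelian totally ramified extension of degree $pq^2$ is precisely the hard object you flag, whereas a uniformizer of $L$ and a Kummer normal form for the degree-$p$ top layer are each accessible by direct manipulation of the polynomial data. Your direct route is valid in principle, but the paper's layer-by-layer reduction is what makes the computation actually go through.
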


The value $\sw(\Jac(C_c)/\Qpur) =1$ is the smallest one among abelian varieties over $\Qpur$ with non tame monodromy extension. That is, in some sense, a counter part of \cite{Br-Kr} and \cite{LRS} where an upper bound for the conductor exponent is given and it is shown that this bound is actually achieved.

In section \ref{sec:3}, one restricts to the case $p=2$ and genus $2$. In this situation there are three possible types of geometry for the stable reduction. In each case, one gives a family of curves with this degeneration type such that the wild monodromy is maximal. This has applications to the inverse Galois problem. For example, we have the following :

\begin{prop}
Let $K=\Q_2^{\rm ur}(2^{1/15})$ and $C_0/K$ the smooth, projective, geometrically integral curve given by ${Y^2=1+2^{3/5}X^2+X^3+2^{2/5}X^4+X^5}$. The irreducible components of its stable reduction $\mathcal{C}_k/k$ are elliptic curves. The monodromy extension $\M/K$ of $C_0/K$ is the decomposition field of an explicitly given polynomial. The curve $C_0/K$ has maximal wild monodromy and $G := \Gal(\M/K) \simeq Q_8 \times Q_8$. Moreover, we have
\[
 G_i \simeq  \left\{ 
				\begin{array}{ll}
				  Q_8\times  Q_8,& -1 \leq i \leq 1,\\
                 {\rm Z}( Q_8) \times  Q_8,& 2 \leq i \leq 3,  \\
                 \lbrace 1 \rbrace \times Q_8& 4 \leq i \leq 31,\\
				 \lbrace 1 \rbrace \times {\rm Z}(Q_8), & 32 \leq  i \leq 543,\\
				 \lbrace 1 \rbrace \times \lbrace 1 \rbrace, & 544 \leq  i.
                \end{array}
           \right.
\]
and $\sw(\Jac(C_0)/K)=45$.
\end{prop}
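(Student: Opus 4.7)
The plan mimics the strategy of Section~\ref{sec:2}: exhibit the stable model explicitly, read off $\Aut_k(\mathcal{C}_k)^{\#}_1$, identify $\M$ as the decomposition field of an auxiliary polynomial, prove maximality by a comparison of orders, and finally extract the ramification breaks and the Swan conductor from explicit valuations on $\M$. Set $\pi := 2^{1/15}$, so that $2^{3/5} = \pi^{9}$, $2^{2/5} = \pi^{6}$, and $f(X) = 1 + \pi^{9} X^{2} + X^{3} + \pi^{6} X^{4} + X^{5}$ reduces modulo $\pi$ to the irreducible polynomial $\bar f(X) = X^{5} + X^{3} + 1 \in \F_{2}[X]$, so that the five finite branch points specialise to distinct points of $\Proj_{k}$. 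The starting observation is that in mixed characteristic $(1 + \pi^{3} X^{2})^{2} = 1 + \pi^{18} X^{2} + \pi^{6} X^{4}$, which gives
\[ \bigl(Y - (1+\pi^{3} X^{2})\bigr)\bigl(Y+(1+\pi^{3} X^{2})\bigr) = X^{2} \bigl(\pi^{9}(1-\pi^{9}) + X + X^{3} \bigr) \]
and isolates the obstruction to Artin--Schreier form on two separate affinoid subdomains of $\Proj_{K}$.

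The first step is to produce two smooth elliptic components of the stable reduction. After the two substitutions $Y \mapsto Y \pm (1+\pi^{3} X^{2})$, the Newton polygon of the right-hand side above suggests localising at the roots of $X^{2}\bigl(\pi^{9}(1-\pi^{9}) + X + X^{3}\bigr)$ by rescalings of the form $X = x_{0} + \pi^{r} T$, $Y = \pi^{s} W$, with $r,s$ tuned so that the normalised equation specialises to an Artin--Schreier equation of shape $W^{2} + W = T^{3}$ over $k$. Each such equation defines a supersingular elliptic curve whose automorphism group has $Q_{8}$ as its $2$-Sylow; the two resulting components, glued at the single point coming from the common boundary annulus, form a nodal curve of arithmetic genus $2$, which is then automatically the stable reduction. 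Because the unique ramification point on each Artin--Schreier component lies at the smooth locus of that component, $\Aut_k(\mathcal{C}_{k})^{\#}_{1}$ acts factor-wise and is canonically isomorphic to $Q_{8} \times Q_{8}$.

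The monodromy extension $\M/K$ is then the smallest extension over which the two substitutions above can be performed with integral coefficients specialising to a chosen set of generators of $Q_{8} \times Q_{8}$; writing out these defining equations and eliminating the auxiliary variables $W$ and $Y$ yields an explicit polynomial $P(T) \in K[T]$ whose decomposition field is $\M$. To prove maximality of the wild monodromy one exhibits two linearly disjoint sub-extensions of $\M/K$, each realising one of the two $Q_{8}$ factors, obtained respectively as the field of definition of the first and of the second Artin--Schreier rescaling; the compositum then has order $64$, so the injection \eqref{injintro} forces $\Gal(\M/K) \simeq Q_{8} \times Q_{8}$.

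For the ramification filtration one computes $v_{\M}(\sigma(\varpi) - \varpi)$ for $\varpi$ a uniformiser of $\M$ and $\sigma$ running over generators of $G$; the two $Q_{8}$ factors correspond to components attached at different Gauss radii in the formal neighbourhood of the stable model, so their ramification breaks occur at very different integers, and within each factor the centre has a strictly larger break than the non-central elements, yielding the announced pattern $i \in \{1,3,31,543\}$. The Swan conductor finally follows from the Ogg--Serre formula
\[ \sw(\Jac(C_{0})/K) = \sum_{i \geq 1} \frac{1}{[G_{0} : G_{i}]} \dim_{\Q_\ell}\bigl(V/V^{G_{i}}\bigr), \]
with $V$ the $\ell$-adic Tate module decomposing as $V_{1} \oplus V_{2}$ along the two elliptic components, each $V_{j}$ being fixed by the factor of $G$ acting trivially on the corresponding component; summing the contributions of the two independent $Q_{8}$ factors then gives $\sw = 45$. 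The main obstacle I expect is the precise bookkeeping leading to the large break $i = 543$, whose determination is especially sensitive to the exact form of the second substitution and to the normalisation of a uniformiser on the second component; a secondary subtlety is to verify that the two $Q_{8}$ factors really occur at different breaks and not at a common one, since a coincidence would collapse the announced filtration.
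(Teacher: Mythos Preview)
Your outline diverges from the paper in two places, and the second is a genuine gap.

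\textbf{Stable model and monodromy polynomial.} Your ``completing the square'' with $(1+\pi^3X^2)^2$ is not the mechanism the paper uses. The paper works with the Taylor expansion $f(X+y)=\sum s_i(y)X^i$ and the auxiliary polynomial $T_f(Y)=s_1(Y)^2-4s_0(Y)s_2(Y)$; for a root $y$ of $T_f$, the relation $s_1(y)=2s_0(y)^{1/2}s_2(y)^{1/2}$ makes the constant, linear and quadratic terms of $f(S+y)$ an exact square, so that $X=y+2^{2/3}T$, $Y=2W+s_0(y)^{1/2}+s_2(y)^{1/2}S$ reduces to $w^2+w=t^3$. The Hensel factorisation $T_f=T_{1,f}\,T_{2,f}$ over $K$ (with $\overline{T_{1,f}}=Y^4$, $\overline{T_{2,f}}=Y^4+1$) then gives the two blow-up centres, and the monodromy extension is $\M=K(y_1,y_5,f(y_1)^{1/2},f(y_5)^{1/2})$ for roots $y_1$ of $T_{1,f}$ and $y_5$ of $T_{2,f}$. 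Irreducibility of each factor and their linear disjointness are checked by a Magma computation; this is what replaces your unspecified polynomial $P(T)$ and your assertion that the two sub-extensions are linearly disjoint.

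\textbf{Ramification filtration.} Here your sketch has a real hole. Saying that the two $Q_8$ factors sit at different Gauss radii and therefore have different breaks does not produce the numbers $1,3,31,543$, and a direct computation of $v_{\M}(\sigma(\varpi)-\varpi)$ in a degree-$64$ extension is not what is done. The paper instead sets $K_i=K(y_i,f(y_i)^{1/2})$ and ${}_iG=\Gal(K_i/K)\simeq Q_8$, computes the lower filtrations of ${}_1G$ and ${}_2G$ separately (via Magma), and then invokes a structural fact you do not mention: $K_1/K$ and $K_2/K$ are \emph{arithmetically disjoint} in the sense of Kida, so by Yamamoto's theorem the upper ramification of the compositum factors as $\Gal(\M/K)^u\simeq {}_1G^u\times{}_2G^u$. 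One reads off the upper breaks $1,\tfrac32,5,21$ from the two $Q_8$ filtrations, then converts back to lower numbering via $\varphi_{\M/K}$ to obtain $1,3,31,543$. Without this disjointness-plus-upper-numbering argument there is no mechanism linking the individual $Q_8$ data to the filtration of $G$, and in particular the large jump at $543$ (which comes from the ${}_2G$ break at $69$ inflated by $[K_1:K]=8$) is inaccessible by the heuristic you give. Your Swan computation via the splitting $V=V_1\oplus V_2$ matches the paper's use of Lemma~\ref{guralnick}, but it only becomes a proof once the filtration is established.
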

Some of the results that we give here were already available in a previous preprint of C. Lehr and  M. Matignon (see \cite{LM3}) , results about the arithmetic of the monodromy extensions, ramification and conductors are new.

\section{Background.}
\label{sec:1}
\noindent \textbf{Notations.} Let $(R,v)$ be a complete discrete valuation ring (DVR) of mixed characteristic $(0,p)$ with fraction field $K$ and algebraically closed residue field $k$. We denote by $\pi_K$ a uniformizer of $R$ and assume that $K$ contains a primitive $p$-th root of unity $\zeta_p$. Let $\lambda:=\zeta_p-1$. If $\L/K$ is an algebraic extension, we will denote by $\pi_{\L}$(resp. $v_{\L}$, resp. $\L^{\circ}$) a uniformizer for $\L$ (resp. the prolongation of $v$ to  $\L$ such that $v_{\L}(\pi_{\L})=1$, resp. the ring of integers of $\L$). If there is no possible confusion we note $v$ for the prolongation of $v$ to an algebraic closure $K^{\rm  alg}$ of $K$.\\

\addtocounter{mysub}{1}
\arabic{mysub}. \textit{Stable reduction of curves.} \label{mysub:reduction} The first result is due to Deligne and Mumford (see for example \cite{Liu} for a presentation following Artin and Winters).
\begin{thm}[Stable reduction theorem]\label{stableth}
Let $C/K$ be a smooth, projective, geometrically connected curve over $K$ of genus $g(C) \geq 2$. There exists a unique finite Galois extension $\M/K$ minimal for the inclusion relation such  that $C_{\M}/\M$ has stable reduction. The stable model $\mathcal{C}$ of $C_{\M}/\M$ over $\M^{\circ}$ is unique up to isomorphism. One has a canonical injective morphism :
\begin{equation}\label{injcano}
\Gal(\M/K) \overset{i}{\hookrightarrow} \Aut_k(\mathcal{C}_k).
\end{equation}
\end{thm}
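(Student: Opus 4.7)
The plan is to split the theorem into three parts: (a) existence of some finite Galois extension $L/K$ over which $C$ acquires stable reduction, (b) uniqueness of the stable model once it exists, and from these derive (c) the existence of a unique minimal such extension $\M/K$ together with the canonical injection into $\Aut_k(\mathcal{C}_k)$.

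For (a), the cleanest route is to pass through the Jacobian. I would invoke Grothendieck's semistable reduction theorem for abelian varieties applied to $\Jac(C)/K$: there exists a finite Galois extension $L/K$ such that $\Jac(C)_L$ has semi-abelian reduction, equivalently such that the inertia of $\Gal(K^{\rm sep}/L)$ acts unipotently on $T_\ell \Jac(C)$ for some (equivalently any) prime $\ell \neq p$. A theorem of Deligne–Mumford (the arithmetic half of the theory of stable curves) then guarantees that this condition is equivalent to $C_L$ admitting semistable reduction. Finally, starting from a regular semistable model one obtains the stable model by successively contracting the chains of smooth rational $(-1)$ and $(-2)$ components meeting the rest of the special fiber in at most two points; this procedure terminates because $g(C)\geq 2$ and produces a stable model of $C_L$ over $L^\circ$.

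For (b) and the construction of $\M$, the key ingredient is the separatedness of the moduli stack $\overline{\mathcal{M}}_g$, i.e., the valuative criterion for stable curves: any two stable models over $L^\circ$ with isomorphic generic fibers are canonically isomorphic, so $\mathcal{C}/L^\circ$ is unique up to unique isomorphism. Now consider the family $\mathcal{F}$ of all finite Galois subextensions $L/K$ inside $K^{\rm alg}$ for which $C_L$ has stable reduction. Given $L_1, L_2 \in \mathcal{F}$, by uniqueness the two stable models become isomorphic after base change to $L_1L_2$, and by Galois descent along $\Gal(L_1L_2/L_1\cap L_2)$ (whose action on the stable model over $L_1L_2^\circ$ is canonical, again by uniqueness) the stable model descends to $(L_1\cap L_2)^\circ$; hence $L_1\cap L_2 \in \mathcal{F}$. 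Intersecting over all of $\mathcal{F}$ yields the smallest element, which is the sought monodromy extension $\M/K$.

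For the canonical injection $i:\Gal(\M/K)\hookrightarrow \Aut_k(\mathcal{C}_k)$: any $\sigma\in\Gal(\M/K)$ acts on $\M^\circ$ and on $C_\M$, and by uniqueness this action lifts to an automorphism of the stable model $\mathcal{C}/\M^\circ$, whose reduction modulo $\pi_\M$ gives $i(\sigma)\in\Aut_k(\mathcal{C}_k)$. Injectivity is the delicate point: if $\sigma\in\ker i$ and $F=\M^{\langle\sigma\rangle}$, then the $\langle\sigma\rangle$-action on $\mathcal{C}$ over $\M^\circ$ is trivial on the closed fiber, so by the rigidity of automorphisms of stable curves (finiteness of $\Aut_k(\mathcal{C}_k)$ combined with the fact that an automorphism of a stable model inducing the identity on the special fiber is determined by an infinitesimal deformation datum) one can show the action descends, giving a stable model of $C_F$ over $F^\circ$; by minimality of $\M$ this forces $F=\M$, i.e., $\sigma=1$.

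The main obstacle is part (a), which genuinely rests on Grothendieck's semistable reduction theorem for abelian varieties; everything else is formal descent and uniqueness, with the only subtle point being the rigidity argument needed for injectivity of $i$. If one prefers a direct, Jacobian-free proof, one can instead follow Artin–Winters and work with the combinatorics of the special fiber of a regular model together with a careful analysis of how base change modifies the dual graph — but this shifts the difficulty rather than removing it.
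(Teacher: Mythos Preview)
The paper does not prove this theorem; it is quoted as the result of Deligne--Mumford, with a reference to \cite{Liu} for an exposition following Artin--Winters. The only elaboration the paper offers is Remark~1 immediately after the statement, which spells out the construction of $i$ (the Galois action on $C_\M$ extends to $\mathcal{C}$ by uniqueness of the stable model, then one reduces modulo $\pi_\M$) and records that $\Gal(K^{\rm alg}/\M)$ is precisely the subgroup of $\Gal(K^{\rm alg}/K)$ acting trivially on $\mathcal{C}_k$. So there is no in-paper argument to compare your sketch against.

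Your outline is the standard one and identifies the right ingredients, but the descent step deserves a warning. In both the proof that $L_1\cap L_2\in\mathcal{F}$ and the injectivity argument you invoke ``Galois descent'' to push the stable model down to a smaller ring of integers. Since the extensions of DVRs involved are in general ramified, a $G$-semilinear action on $\mathcal{C}$ is strictly less than fppf descent data (one has $\M^\circ\otimes_{F^\circ}\M^\circ \not\simeq \prod_\sigma \M^\circ$), so you cannot simply quote effectivity of descent; and rigidity of $\Aut$ only tells you such a lift is unique, not that the resulting object over $F^\circ$ is stable. What one actually produces is the quotient scheme $\mathcal{C}/G$ over $F^\circ$, and the substantive point is the local verification (most delicately at the nodes) that when $G$ acts trivially on $\mathcal{C}_k$ this quotient is still flat with stable special fibre. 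That verification is exactly what the references you would cite anyway (Deligne--Mumford, or \cite{Liu} Chapter~10) carry out, so your sketch is not wrong, but it understates where the content lies.
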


\noindent \textbf{Remarks :}\begin{enumerate}
\item Let's explain the action of $\Gal(K^{\rm alg}/K)$ on $\mathcal{C}_k/k$. The group $\Gal(K^{\rm alg}/K)$ acts on $C_{\M}:=C\times \M$ on the right. By unicity of the stable model, this action extends to $\mathcal{C}$ :

\begin{center}
\begin{tikzpicture}[scale=1]
\node (CM1) at (-1,1) {$\mathcal{C}$};
\draw[->] (-0.5,1)--(0.5,1);
\node (CM2) at (1,1) {$\mathcal{C}$};
\node (sigma1) at (-0.05,1.2) {$\sigma$};
\draw[->] (-1,0.7)--(-1,-0.5);
\draw[->] (1,0.7)--(1,-0.5);
\node (R1) at (-1,-1) {$\M^{\circ}$};
\draw[->] (-0.5,-1)--(0.5,-1);
\node (R2) at (1,-1) {$\M^{\circ}$};
\node (sigma2) at (-0.05,-0.8) {$\sigma$};
\end{tikzpicture}
\end{center}

Since $k=k^{\rm alg}$ one gets $\sigma \times k = \Id_k$, whence the announced action. The last assertion of the theorem characterizes the elements of $\Gal(K^{{\rm alg}}/\M)$ as the elements of $\Gal(K^{{\rm  alg}}/K)$ that  trivially act on $\mathcal{C}_k/k$.

\item  If $p > 2g(C)+1$, then $C/K$ has stable reduction over a tamely ramified extension of $K$. We will study examples of covers with $p \leq 2g(C)+1$.

\item Our results will cover the elliptic case. Let $E/K$ be an elliptic curve with additive reduction. If its modular invariant is integral, then there exists a smallest extension $\M$ of $K$ over which $E/K$ has good reduction. Else $E/K$ obtains split multiplicative reduction over a unique quadratic extension of $K$ ( see \cite{Kr}).
\end{enumerate}

\begin{defi}
The extension $\M/K$ is the \textsl{monodromy extension of $C/K$}. We call $\Gal(\M/K)$ the \textsl{monodromy group of $C/K$}. It has a unique $p$-Sylow subgroup $\Gal(\M/K)_1$ called the \textsl{wild monodromy group}. The extension $\M/\M^{\Gal(\M/K)_1}$ is the \textsl{wild monodromy extension}.
\end{defi}

From now on we consider smooth, projective, geometrically integral curves $C/K$ of genus $g(C) \geq 2$ birationally given by ${Y^p=f(X):=\prod_{i=0}^t (X-x_i)^{n_i}}$ with  $(p,\sum_{i=0}^tn_i)=1$, $(p,n_i)=1$ and $\forall \; 0 \leq i \leq t, x_i \in R^{\times}$. Moreover, we assume that $\forall i \neq j, \; v(x_i-x_j)=0$, that is to say, the branch locus $B= \lbrace x_0, \dots , x_t, \infty \rbrace$ of the cover has \textsl{equidistant geometry}. We denote by $Ram$ the ramification locus of the cover.\\

\noindent \textbf{Remark :} We only ask $p$-cyclic covers to satisfy Raynaud's theorem 1' \cite{Ra} condition, that is the branch locus is $K$-rational with equidistant geometry. This has consequences on the image of \eqref{injcano}.

\begin{prop}\label{propD0}
Let $\mathcal{T}={\rm Proj}( \M^{\circ}[X_0,X_1])$ with $X=X_0/X_1$. The normalization $\mathcal{Y}$ of $\mathcal{T}$ in $K(C_{\M})$ admits a blowing-up $\tilde{\mathcal{Y}}$ which is a semi-stable model of $C_{\M}/\M$. The dual graph of $\tilde{\mathcal{Y}}_k/k$ is a tree and the points in $Ram$ specialize in a unique irreducible component $D_0 \simeq \Proj_k$ of $\tilde{\mathcal{Y}}_k/k$. There exists a contraction morphism $h : \tilde{\mathcal{Y}} \to \mathcal{C}$, where $\mathcal{C}$ is the stable model of $C_{\M}/\M$ and 
\begin{equation}\label{inj}
\Gal(\M/K) \hookrightarrow \Aut_k(\mathcal{C}_k)^{\#},
\end{equation}
where $\Aut_k(\mathcal{C}_k)^{\#}$ is the subgroup of $\Aut_k(\mathcal{C}_k)$ of elements inducing the identity on $h(D_0)$.
\end{prop}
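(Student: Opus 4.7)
The approach is to first build a semi-stable model of $C_\M$ by a sequence of blow-ups starting from the projective line $\mathcal{T}$, then contract it to the stable model $\mathcal{C}$, and finally transport the $\Gal(\M/K)$-action along the whole construction in order to read off~\eqref{inj}.

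Concretely, I would first form the normalization $\mathcal{Y}$ of $\mathcal{T}$ in $K(C_\M) = \M(X)[Y]/(Y^p - f(X))$. Because $(p,n_i)=1$ for every $i$ and $(p,\sum n_i)=1$, the cover $C_\M \to \Proj_\M$ is totally ramified at the $x_i$ and at $\infty$; however $\mathcal{Y}$ is not semi-stable since in residual characteristic $p$ the Kummer equation $Y^p=f(X)$ degenerates to an inseparable relation. The key input is Raynaud's Théorème~1' of \cite{Ra} (made explicit in the present setting by \cite{LM2}), which applies thanks to the equidistant geometry of $B$ and produces a finite sequence of admissible blow-ups $\tilde{\mathcal{Y}} \to \mathcal{Y}$ such that $\tilde{\mathcal{Y}}$ is a regular arithmetic surface with semi-stable special fibre.

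Since each admissible blow-up attaches a new projective line along a single node to the preceding fibre, the dual graph of $\tilde{\mathcal{Y}}_k$ is obtained inductively from the unique vertex coming from $\mathcal{T}_k \simeq \Proj_k$ by attaching leaves, hence is a tree. The strict transform of $\mathcal{T}_k$ in $\tilde{\mathcal{Y}}_k$ is the distinguished component $D_0 \simeq \Proj_k$; by equidistant geometry the branch points $x_0,\dots,x_t,\infty$ reduce to pairwise distinct points of $\mathcal{T}_k$ which avoid the centres of the blow-ups, so every point of $Ram$ specializes in $D_0$. The contraction morphism $h : \tilde{\mathcal{Y}} \to \mathcal{C}$ onto the stable model then exists by the standard procedure (see \cite{Liu} 10.3), collapsing precisely the semi-stable but non-stable components.

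For the embedding, observe that $\Gal(\M/K)$ acts on $\M^{\circ}$, hence on $\mathcal{T}=\mathrm{Proj}(\M^{\circ}[X_0,X_1])$, and by construction fixes $X_0,X_1$ pointwise, so the induced action on $\mathcal{T}_k$ is trivial. By uniqueness of the normalization and because the blow-up centres form a Galois-invariant set determined by the singular locus of $\mathcal{Y}$ and by the image of $B$, the action lifts to $\tilde{\mathcal{Y}}$ and restricts to the identity on $D_0$. By uniqueness of the stable model it descends along $h$ to an action on $\mathcal{C}$ fixing $h(D_0)$ pointwise, so \eqref{injcano} in fact lands in $\Aut_k(\mathcal{C}_k)^{\#}$, which gives~\eqref{inj}. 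The main obstacle is the first step: extracting from \cite{Ra, LM2} that the semi-stable model can be arranged as a chain of blow-ups of $\mathcal{T}$ with the ramification locus concentrated in the single component $D_0$; once this specific shape of $\tilde{\mathcal{Y}}_k$ is established the contraction and the Galois descent are essentially formal.
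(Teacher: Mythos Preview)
Your outline matches the paper's, but the substantive step is handled by assertion rather than argument, and there is a terminological slip that hides what actually has to be checked.

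First, $D_0$ is not ``the strict transform of $\mathcal{T}_k$'': $\mathcal{T}_k$ sits in $\mathcal{T}$, not in $\mathcal{Y}$ or $\tilde{\mathcal{Y}}$. What you mean is the component of $\tilde{\mathcal{Y}}_k$ dominating the special fibre of $\mathcal{Y}$ (equivalently, the one mapping onto $\mathcal{T}_k$). The paper identifies $D_0\simeq\Proj_k$ not by strict-transform bookkeeping but by noting that its function field is a purely inseparable extension of $k(\mathcal{T}_k)$, hence of genus $0$.

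Second, the claim that the points of $Ram$ ``avoid the centres of the blow-ups'' is precisely the content of the proposition, and equidistant geometry of $B$ in $\mathcal{T}$ alone does not give it: one must check that $\mathcal{Y}$ is already smooth above each ramification point. The paper does this explicitly: writing $f(X)=(X-x_0)^{n_0}S(X)$ and choosing $an_0+bp=1$, the local equation of $\mathcal{Y}$ near $x_0$ is $Y^p=(X-x_0)S(X)^a$; since $v(S(x_0))=0$, its reduction is smooth at the point above $\bar{x}_0$, so no blow-up is centred there and the ramification point specializes into $D_0$. You flag this as ``the main obstacle'' and defer it to \cite{Ra,LM2}, but the paper settles it in two lines with this computation rather than by quoting structural results.

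Finally, your route to \eqref{inj} via the $\Gal(\M/K)$-equivariant morphism $D_0\to\mathcal{T}_k$ is different from the paper's and needs one more remark to close: you must say why a trivial action on $\mathcal{T}_k$ forces a trivial action on $D_0$ (the map is radicial, hence a monomorphism of schemes). The paper instead argues directly on $D_0$: the branch locus $B$ is $K$-rational with $|B|\geq 3$, so every $\sigma\in\Gal(\M/K)$ fixes at least three points of $D_0\simeq\Proj_k$ and is therefore the identity on $D_0$. Both approaches work, but the paper's avoids lifting the action through the blow-ups and the normalization.
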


\begin{proof}
Let $f(X)=(X-x_0)^{n_0}S(X)$ and $an_0+bp=1$. Then above $\mathcal{T} \backslash B =\Spec A$ (resp. $\mathcal{T} \backslash \lbrace B  \backslash x_0 \rbrace = \Spec A_0$), the equation of $\mathcal{Y}$ is  :
\begin{align*}
A[Y]/(Y^p-f(X)) \; \; ({\rm resp.} \; A_0[Y]/(Y^p-(X-x_0)S(X)^a)),
\end{align*}
(using \cite{Liu} 4.1.18). Since $v(S(x_0))=0$, the ramification locus $Ram$ specialize in a unique component $D_0$ of $\tilde{\mathcal{Y}}_k$. Using \cite{Ra} theorem 2, one sees that $\tilde{\mathcal{Y}}_k$ is a tree. It implies that there exists a contraction morphism $h : \tilde{\mathcal{Y}} \to \mathcal{X}$ of the components of $\tilde{\mathcal{Y}}_k$ isomorphic to $\Proj_k$ meeting $\tilde{\mathcal{Y}}_k$ in at most $2$ points (\cite{Liu} 7.5.4 and 8.3.36). The scheme $\mathcal{X}$ is seen to be stable (\cite{Liu} 10.3.31), so $\mathcal{X} \simeq \mathcal{C}$.

The component $D_0$ is smooth of genus $0$ (being birational to a curve with function field a purely inseparable extension of $K(\Proj_k)$) so $D_0 \simeq \Proj_k$. Then,  $B$ having $K$-rational equidistant geometry with $|B| \geq 3$, any element of $\Gal(\M/K)$ induces the identity on $D_0$, giving \eqref{inj}.
\end{proof}

\noindent \textbf{Remark :} The component $D_0$ is the so called \textsl{original component}.

\begin{defi}
If \eqref{inj} is surjective, we say that $C$ has \textsl{maximal monodromy}. If $v_p(|\Gal(\M/K)|)=v_p(|\Aut_k(\mathcal{C}_k)^{\#}|)$, we say that $C$ has \textsl{maximal wild monodromy}.
\end{defi}

\begin{defi}
The valuation on $K(X)$ corresponding to the discrete valuation ring $R[X]_{(\pi_K)}$ is called the \textsl{Gauss valuation $v_X$} with respect to $X$. We then have
\begin{align*}
v_X \left( \sum_{i=0}^ma_iX^i \right)= \min \lbrace v(a_i), \; 0 \leq i \leq m \rbrace.
\end{align*}
Note that a change of variables $T=\frac{X-y}{\rho}$ for $y, \rho \in R$ induces a Gauss valuation $v_T$. These valuations are exactly those that come from the local rings at generic points of components in the semi-stables models of $\Proj_K$.
\end{defi}

\addtocounter{mysub}{1}
\arabic{mysub}. \textit{Galois extensions of complete DVRs.} Let $\L/K$ be a finite Galois extension with group $G$. Then $G$ is endowed with a \textsl{lower ramification filtration} $(G_i)_{i \geq -1}$ where $G_i$ is the \textsl{$i$-th lower ramification group} defined by $G_i := \lbrace \sigma \in G \; | \; v_{\L}(\sigma(\pi_{\L})-\pi_{\L}) \geq i+1 \rbrace$. The integers $i$ such that $G_i \neq G_{i+1}$ are called \textsl{lower breaks}. For $\sigma \in G- \lbrace 1 \rbrace$, let $i_G(\sigma):= v_{\L}(\sigma(\pi_{\L})-\pi_{\L})$. The group $G$ is also endowed with a \textsl{higher ramification filtration} $(G^i)_{i \geq -1}$ which can be computed from the $G_i$'s by means of the \textsl{Herbrand's function} $\varphi_{\L/K}$. The real numbers $t$ such that $ \forall \epsilon >0, \; G^{t+\epsilon} \neq G^t$  are called \textsl{higher breaks}. We will use the following lemma (see for example \cite{Hyo}).
\begin{lem} \label{hyodo}
Let $\L/K$ defined by $X^p=1+w\pi_K^{s}$ with $ {0 < s < \frac{p}{p-1}v_K(p)}$, $(s,p)=1$ and $w \in R^{\times}$. The different ideal $\mathcal{D}_{\L/K}$ satisfies :
$$ v_K(\mathcal{D}_{\L/K})=v_K(p)+\frac{p-1}{p}(1-s).$$
\end{lem}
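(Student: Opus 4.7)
The plan is to exhibit $\L/K$ concretely as a totally ramified cyclic degree-$p$ extension, pick a uniformizer $\pi_{\L}$, compute $i_G(\sigma):=v_{\L}(\sigma(\pi_{\L})-\pi_{\L})$ for a generator $\sigma$ of $G:=\Gal(\L/K)$, and deduce the different from the identity $v_{\L}(\mathcal{D}_{\L/K})=(p-1)\,i_G(\sigma)$, valid because in a cyclic $p$-extension all non-identity elements of $G$ share the same lower ramification invariant.

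Writing $\alpha$ for a $p$-th root of $1+w\pi_K^s$ and setting $\beta:=\alpha-1$, the element $\beta$ is a root of
\[
h(Y) := (1+Y)^p - 1 - w\pi_K^s = Y^p + \sum_{i=1}^{p-1}\binom{p}{i}Y^i - w\pi_K^s.
\]
The $v_K$-valuations of the coefficients are $0$ for $Y^p$, $v_K(p)$ for each $Y^i$ with $1\le i\le p-1$, and $s$ for the constant term. The hypothesis $s<\tfrac{p}{p-1}v_K(p)$ forces every interior Newton point $(i, v_K(p))$ to lie strictly above the segment from $(0,s)$ to $(p,0)$ (the binding constraint is at $i=1$). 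Hence the Newton polygon of $h$ consists of a single segment of slope $-s/p$, $h$ is irreducible because $(s,p)=1$, and $\L/K$ is totally ramified of degree $p$ with $v_{\L}(\beta)=s$.

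By Bézout, pick integers $1\le a\le p-1$ and $b\ge 0$ with $as-bp=1$. Then $\pi_{\L}:=\beta^a\pi_K^{-b}$ is a uniformizer of $\L$. Taking $\sigma$ to be the generator of $G$ with $\sigma(\alpha)=\zeta_p\alpha$, so $\sigma(\beta)=\zeta_p\beta+\lambda$, one expands
\[
\sigma(\pi_{\L}) - \pi_{\L} = \pi_K^{-b}\Bigl[(\zeta_p^a-1)\beta^a + \sum_{j=1}^{a}\binom{a}{j}\zeta_p^{a-j}\beta^{a-j}\lambda^j\Bigr].
\]
The $j$-th summand has $v_{\L}$-valuation $(a-j)s+jpv_K(\lambda)$, and the leading $(\zeta_p^a-1)\beta^a$ term has valuation $as+pv_K(\lambda)$ (using $1\le a\le p-1$ so that $v_K(\zeta_p^a-1)=v_K(\lambda)$). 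Under $s<pv_K(\lambda)=\tfrac{p}{p-1}v_K(p)$, these valuations are strictly increasing in $j$, so the minimum is attained uniquely at $j=1$, and substituting $bp=as-1$ gives
\[
i_G(\sigma) = (a-1)s + pv_K(\lambda) - bp = pv_K(\lambda) - s + 1.
\]

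Combining this with $v_{\L}(\mathcal{D}_{\L/K}) = (p-1)\,i_G(\sigma) = pv_K(p) + (p-1)(1-s)$ and dividing by $e=p$ yields the claimed $v_K(\mathcal{D}_{\L/K}) = v_K(p) + \tfrac{p-1}{p}(1-s)$. The one delicate step is the binomial-expansion valuation count: one must verify that the $j=1$ term strictly dominates all others (so that no hidden cancellations raise the true valuation), and this is precisely where the strict inequality $s<\tfrac{p}{p-1}v_K(p)$ of the hypothesis intervenes.
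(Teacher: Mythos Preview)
The paper does not actually prove this lemma; it simply states it with a reference to \cite{Hyo}. So there is no in-paper argument to compare against, and your self-contained proof via the Newton polygon and an explicit uniformizer is a legitimate and standard way to establish the result.

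Your computation is correct. One small point of exposition: when you say ``these valuations are strictly increasing in $j$, so the minimum is attained uniquely at $j=1$'', you are implicitly also discarding the leading term $(\zeta_p^a-1)\beta^a$, whose valuation $as+pv_K(\lambda)$ exceeds that of the $j=1$ term by exactly $s>0$. This comparison uses the \emph{lower} bound $s>0$ of the hypothesis, whereas the monotonicity in $j\ge 1$ uses the \emph{upper} bound $s<pv_K(\lambda)$; it would be cleaner to state both inequalities explicitly so the reader sees that the $j=1$ term is the strict minimum over all terms in the expansion, not just over the summands with $j\ge 1$. With that clarification the valuation count is airtight, and the passage from $i_G(\sigma)=pv_K(\lambda)-s+1$ to $v_K(\mathcal{D}_{\L/K})=v_K(p)+\tfrac{p-1}{p}(1-s)$ via $v_{\L}(\mathcal{D}_{\L/K})=(p-1)i_G(\sigma)$ and $(p-1)v_K(\lambda)=v_K(p)$ is correct.
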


\addtocounter{mysub}{1}
\arabic{mysub}. \textit{Extra-special $p$-groups.} The Galois groups and automorphism groups that we will have to consider are $p$-groups with peculiar group theoretic properties (see for example \cite{Su} for an account on extra-special $p$-groups). We will denote by  ${\rm Z}(G)$ (resp. ${\rm D}(G)$, $\Phi(G)$) the center (resp. the derived subgroup,  the Frattini subgroup) of $G$. If $G$ is a $p$-group, one has  $\Phi(G)={\rm D}(G)G^p$.

\begin{defi}
An \textsl{extra-special $p$-group} is a non abelian $p$-group $G$ such that $ {\rm D}(G)={\rm Z}(G)=\Phi(G)$ has order $p$.
\end{defi}

\begin{prop}
Let $G$ be an extra-special $p$-group.
\begin{enumerate}
\item Then $|G|=p^{2n+1}$ for some $n \in \N^{\times}$.
\item One has the exact sequence 
\begin{align*}
0 \to {\rm Z}(G) \to G \to (\Z/p\Z)^{2n} \to 0.
\end{align*}
\end{enumerate}
\end{prop}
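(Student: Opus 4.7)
The plan is to derive part (2) first by extracting structural consequences from the definition, and then use a bilinear-form argument on the quotient to force the exponent in part (1) to be even. The starting observation is that for any finite $p$-group $H$, the quotient $H/\Phi(H)$ is elementary abelian: indeed $\Phi(H)={\rm D}(H)H^p$, so $H/\Phi(H)$ is abelian of exponent dividing $p$. Applied to our extra-special $G$, the hypothesis $\Phi(G)={\rm Z}(G)$ then yields that $G/{\rm Z}(G)$ is elementary abelian, say of dimension $m$ over $\F_p$. Combined with $|{\rm Z}(G)|=p$, this gives the short exact sequence
\[
0 \to {\rm Z}(G) \to G \to (\Z/p\Z)^{m} \to 0
\]
and $|G|=p^{m+1}$. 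It only remains to show that $m$ is even and $m\geq 2$.

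To pin down the parity of $m$ I would introduce the commutator pairing
\[
\langle \cdot,\cdot \rangle : G/{\rm Z}(G) \times G/{\rm Z}(G) \longrightarrow {\rm Z}(G), \quad (\bar x, \bar y) \longmapsto [x,y].
\]
This is well-defined and $\F_p$-bilinear because ${\rm D}(G)\subseteq {\rm Z}(G)$ forces all commutators to be central and to depend only on the classes of $x$ and $y$ modulo ${\rm Z}(G)$ (the standard commutator identities $[xx',y]=[x,y][x',y]$ modulo higher commutators collapse here, since those higher commutators vanish). It is alternating since $[x,x]=1$. After identifying ${\rm Z}(G)$ with $\F_p$, this is an alternating $\F_p$-bilinear form on the vector space $V:=G/{\rm Z}(G)$.

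The crucial step is non-degeneracy. If $\bar x \in V$ satisfies $[x,y]=1$ for all $y \in G$, then $x$ centralizes $G$, hence $x \in {\rm Z}(G)$ and $\bar x=0$. Thus the pairing is non-degenerate. A standard result in linear algebra over a field says that a non-degenerate alternating bilinear form exists on a vector space only if its dimension is even; hence $m=2n$ for some $n\in\N$. Finally $n\geq 1$ because $G$ is non-abelian, so $G\neq {\rm Z}(G)$ and thus $m>0$. This establishes both statements.

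I expect no serious obstacle; the only point that requires a brief care is the bilinearity of the commutator map, which relies precisely on the inclusion ${\rm D}(G)\subseteq {\rm Z}(G)$ built into the definition of an extra-special $p$-group. Everything else reduces to the elementary structural facts recalled above.
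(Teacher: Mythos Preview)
Your argument is correct and is precisely the standard proof of this fact: pass to the elementary abelian quotient $G/{\rm Z}(G)$ via $\Phi(G)={\rm Z}(G)$, then use the commutator map as a non-degenerate alternating $\F_p$-bilinear form on this quotient to force even dimension. The bilinearity check is handled correctly, since ${\rm D}(G)\subseteq {\rm Z}(G)$ makes $[x,y]^{x'}=[x,y]$ in the identity $[xx',y]=[x,y]^{x'}[x',y]$.

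Note, however, that the paper does not actually prove this proposition: it is stated as background with a reference to Suzuki's textbook \cite{Su}, and no argument is supplied. So there is nothing to compare against in the paper itself; your proposal simply fills in the omitted (and well-known) proof.
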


\noindent \textbf{Remark :} With the previous notations, we will encounter curves such that the $p$-Sylow subgroup of $\Aut_k(\mathcal{C}_k)^{\#}$ is an extra-special $p$-group. In this case, the above short exact sequence has a geometric description that we will make explicit later on.\\

\addtocounter{mysub}{1}
\arabic{mysub}. \textit{Torsion points on abelian varieties.} Let $A/K$ be an abelian variety over $K$ with  potential good reduction. Let $ \ell \neq p$ be a prime number, we denote by $A[\ell]$ the $\ell$-torsion subgroup of $A(K^{\rm  alg})$ and by $T_{\ell}(A)= \varprojlim A[\ell^n]$ (resp. $V_{\ell}(A)=T_{\ell}(A) \otimes\Q_{\ell}$) the Tate module (resp. $\ell$-adic Tate module) of $A$.

\indent The following result may be found in \cite{Gur} (paragraph 3). We recall it for the convenience of the reader.

\begin{lem}\label{guralnick}
Let $k=k^{\rm alg}$  be a field with ${\rm char} \; k=p \geq 0$ and $C/k$ be a projective, smooth, integral curve. Let $\ell \neq p$ be a prime number and $H$ be a finite subgroup of $\Aut_k(C)$ such that $(|H|,\ell)=1$. Then 
\[2g(C/H)= \dim_{\F_{\ell}} \Jac(C)[\ell]^H\].
\end{lem}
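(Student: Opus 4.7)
The plan is to pass to the quotient $C' := C/H$, which is a smooth projective integral curve since $k$ is algebraically closed and $H \subset \Aut_k(C)$ is finite, and to compare $\ell$-torsion on $\Jac(C)$ and $\Jac(C')$ via the associated maps of Jacobians.

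First I would set $\pi : C \to C'$ to be the quotient morphism, a finite separable map of degree $|H|$, and introduce the two induced homomorphisms of abelian varieties, namely the pullback $\pi^{\ast} : \Jac(C') \to \Jac(C)$ and the norm (pushforward) $\pi_{\ast} : \Jac(C) \to \Jac(C')$. The key formal identities are that $\pi_{\ast} \circ \pi^{\ast}$ equals multiplication by $\deg(\pi) = |H|$ on $\Jac(C')$, and that $\pi^{\ast} \circ \pi_{\ast}$ equals $\sum_{h \in H} h^{\ast}$ on $\Jac(C)$; both come from the standard relationship between trace, norm and the Galois action on divisor classes for a Galois cover of smooth curves.

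Next I would restrict everything to $\ell$-torsion. Because $\ell \neq p$, we have $\dim_{\F_{\ell}} \Jac(C')[\ell] = 2g(C') = 2g(C/H)$, so it suffices to produce an $\F_{\ell}$-linear isomorphism $\pi^{\ast} : \Jac(C')[\ell] \xrightarrow{\sim} \Jac(C)[\ell]^{H}$. Injectivity is immediate: since $(|H|,\ell) = 1$, multiplication by $|H|$ is invertible on $\Jac(C')[\ell]$, and $\pi_{\ast} \circ \pi^{\ast} = [|H|]$ forces $\pi^{\ast}$ to be injective on $\ell$-torsion. Clearly $\operatorname{im}(\pi^{\ast}) \subseteq \Jac(C)^{H}$, so $\pi^{\ast}(\Jac(C')[\ell]) \subseteq \Jac(C)[\ell]^{H}$. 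For surjectivity, pick $x \in \Jac(C)[\ell]^{H}$; then
\[
\pi^{\ast}(\pi_{\ast}(x)) = \sum_{h \in H} h^{\ast}(x) = |H| \cdot x,
\]
and since $|H|$ is a unit in $\F_{\ell}$, writing $|H|^{-1}$ for its inverse mod $\ell$ we get $x = \pi^{\ast}(|H|^{-1} \pi_{\ast}(x)) \in \operatorname{im}(\pi^{\ast})$.

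Combining these steps gives $\dim_{\F_{\ell}} \Jac(C)[\ell]^{H} = \dim_{\F_{\ell}} \Jac(C')[\ell] = 2g(C/H)$. The main point that has to be handled carefully (rather than being a genuine obstacle) is the verification of the identity $\pi^{\ast} \circ \pi_{\ast} = \sum_{h \in H} h^{\ast}$: it is what makes the tame hypothesis $(|H|, \ell) = 1$ the only ingredient actually needed, and it is the place where the Galois nature of $\pi$ (as opposed to merely being finite separable) is used.
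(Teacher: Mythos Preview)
Your argument is correct. The paper itself does not prove this lemma: it merely records the statement and refers the reader to \cite{Gur} (paragraph~3). The approach you give --- using $\pi_{\ast}\circ\pi^{\ast}=[|H|]$ and $\pi^{\ast}\circ\pi_{\ast}=\sum_{h\in H}h^{\ast}$ for the Galois cover $\pi:C\to C/H$, together with invertibility of $|H|$ on $\ell$-torsion --- is precisely the standard proof and is essentially what one finds in the cited reference. One small point worth making explicit (you implicitly use it) is that $\pi_{\ast}$ sends $\ell$-torsion to $\ell$-torsion, so that $|H|^{-1}\pi_{\ast}(x)$ indeed lies in $\Jac(C')[\ell]$; this is immediate since $\pi_{\ast}$ is a homomorphism of abelian varieties.
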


\indent If $\ell \geq 3$, then  $\L=K(A[\ell])$ is the minimal extension over which $A/K$ has good reduction. It is a Galois extension with group $G$ (see \cite{SerTat}). We denote by $r_G$ (resp. $1_G$) the character of the regular (resp. unit) representation of $G$. We denote by $I$ the inertia group of $K^{\rm alg}/K$. For further explanations about  conductor exponents see \cite{Ser2}, \cite{Ogg} and \cite{SerTat}.
\begin{defi}
\begin{enumerate}
\item Let 
\begin{align*}
a_G(\sigma) & := -i_G(\sigma), \; \; \; \;\sigma \neq 1,\\
a_G(1)      & := \sum_{\sigma \neq 1} i_G(\sigma),
\end{align*}
 and $\sw_G:=a_G-r_G+1_G$. Then, $a_G$ is the character of a $\Q_{\ell}[G]$-module and there exists a projective $\Z_{\ell}[G]$-module $\Sw_G$ such that $\Sw_G \otimes_{\Z_{\ell}} \Q_{\ell}$ has character $\sw_G$.
\item We still denote by $T_{\ell}(A)$ (resp. $A[\ell]$) the $\Z_{\ell}[G]$-module (resp. $\F_{\ell}[G]$-module) afforded by $G \to \Aut(T_{\ell}(A))$ (resp. $ G \to \Aut(A[\ell])$). Let 
\begin{align*}
\sw(A/K)&:=\dim_{\F_{\ell}} \Hom_G(\Sw_G,A[\ell]),\\
\epsilon(A/K)&:=\codim_{\Q_{\ell}} V_{\ell}(A)^I.
\end{align*}
The integer $f(A/K):=\epsilon(A/K)+\sw(A/K)$ is the so called \textsl{conductor exponent of $A/K$} and $\sw(A/K)$ is the \textsl{Swan conductor of $A/K$}.
\end{enumerate}
\end{defi}

\begin{prop}Let $\ell \neq p$, $ \ell \geq 3$ be a prime number.
\begin{enumerate} 
\item The integers $\sw(A/K)$ and $\epsilon(A/K)$ are independent of $\ell$.
\item One has 
\[ \sw(A/K) = \sum_{i \geq 1} \frac{|G_i|}{|G_0|} \dim_{\F_{\ell}} A[\ell]/A[\ell]^{G_i}.\]
Moreover, for $\ell$ large enough, $\epsilon(A/K)=\dim_{\F_{\ell}}A[\ell]/A[\ell]^{G_0}$.
\end{enumerate}
\end{prop}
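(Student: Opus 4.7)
The plan is to derive all three statements from Serre's character-theoretic formalism for conductors of linear representations, by passing between the $\F_{\ell}[G]$-module $A[\ell]$ and the $\Q_{\ell}[G]$-module $V_{\ell}(A)$. The key enabling fact is that for $i \geq 1$ the group $G_i$ sits inside the wild inertia group $G_1$, which is a pro-$p$ group; since $\ell \neq p$, the order of $G_i$ is prime to $\ell$, hence $T_{\ell}(A)^{G_i}$ is a $\Z_{\ell}$-direct summand of $T_{\ell}(A)$, one has $A[\ell]^{G_i} = T_{\ell}(A)^{G_i}\otimes_{\Z_{\ell}}\F_{\ell}$, and in particular $\dim_{\F_{\ell}} A[\ell]^{G_i} = \dim_{\Q_{\ell}} V_{\ell}(A)^{G_i}$.

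For the formula in part (2), I would begin by translating the definition $\sw(A/K) = \dim_{\F_{\ell}} \Hom_G(\Sw_G, A[\ell])$ into the inner product $\langle \sw_G, \chi\rangle_G$, where $\chi$ is the character of $V_{\ell}(A)$. This is legitimate because $\Sw_G$ is projective over $\Z_{\ell}[G]$: for any finitely generated $\Z_{\ell}[G]$-module $M$, one has $\dim_{\F_{\ell}}\Hom_G(\Sw_G, M\otimes\F_{\ell}) = \mathrm{rk}_{\Z_{\ell}}\Hom_G(\Sw_G, M)$, and the latter equals $\langle \sw_G, \chi_{M\otimes\Q_{\ell}}\rangle_G$ by orthogonality of characters applied to a projective cover. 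Expanding $\sw_G = a_G - r_G + 1_G$, using $a_G(\sigma) = -i_G(\sigma)$ for $\sigma \neq 1$, and stratifying $G_0 \setminus \{1\}$ by the layers $G_i \setminus G_{i+1}$ on which $i_G(\sigma) = i+1$, a direct rearrangement produces
\[ \langle \sw_G, \chi\rangle_G = \sum_{i \geq 0}\frac{|G_i|}{|G_0|}\dim\bigl(V_{\ell}(A)/V_{\ell}(A)^{G_i}\bigr) \; - \; \dim\bigl(V_{\ell}(A)/V_{\ell}(A)^{G_0}\bigr), \]
whose $i=0$ term cancels the last term. Substituting the dimension equality from the first paragraph then yields the formula in (2).

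Parts (1) and (3) follow directly. The right-hand side of (2) depends on $\ell$ only through the integers $\dim_{\F_{\ell}} A[\ell]/A[\ell]^{G_i}$, which for $i \geq 1$ coincide with the $\ell$-independent quantities $\dim_{\Q_{\ell}} V_{\ell}(A)/V_{\ell}(A)^{G_i}$ (a standard consequence of potential good reduction and the N\'eron--Ogg--Shafarevich criterion); hence $\sw(A/K)$ is $\ell$-independent, and the corresponding statement for $\epsilon(A/K) = \codim_{\Q_{\ell}} V_{\ell}(A)^I$ follows from the same principle. When in addition $\ell$ is prime to $|G_0|$, the semisimplicity argument of the first paragraph also applies at $i = 0$, so $\dim_{\F_{\ell}} A[\ell]^{G_0} = \dim_{\Q_{\ell}} V_{\ell}(A)^{G_0} = \dim_{\Q_{\ell}} V_{\ell}(A)^I$, yielding (3).

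The principal subtlety, and essentially the only one, is the rigorous passage from $A[\ell]$ to $V_{\ell}(A)$ in the character computation; this rests on the projectivity of $\Sw_G$ together with the elementary observation that, for a finite group $H$ of order prime to $\ell$ acting on a free $\Z_{\ell}$-module, the formation of $H$-invariants commutes with reduction modulo $\ell$. Everything else is a careful bookkeeping inside Serre's ramification formalism.
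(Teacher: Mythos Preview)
The paper does not give its own proof of this proposition: it is stated as background, with the preceding sentence pointing to \cite{Ser2}, \cite{Ogg} and \cite{SerTat} for details. Your argument is a correct reconstruction of what those references contain. The three ingredients you isolate---projectivity of $\Sw_G$ to pass from $\Hom_G(\Sw_G,A[\ell])$ to the character pairing $\langle \sw_G,\chi\rangle$ on $V_\ell(A)$; the fact that $G_i$ for $i\geq 1$ is a $p$-group so that invariants commute with reduction mod $\ell$; and $\ell$-independence of $\dim_{\Q_\ell}V_\ell(A)^{G_i}$ via the N\'eron model of $A$ over $L^{G_i}$---are exactly the standard ones. One point worth making explicit (you use it implicitly) is that the extension $L=K(A[\ell])$ and hence the group $G$ with its filtration $(G_i)$ are themselves independent of $\ell$: this is where potential good reduction and the Serre--Tate criterion enter, and without it the formula in (2) would not even be comparing the same groups for different $\ell$.
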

\noindent \textbf{Remark :} It follows from the definition that $\sw(A/K)=0$ if and only if $G_1= \lbrace 1 \rbrace$. The Swan conductor is a measure of the wild ramification.\\

\section{Covers with potential good reduction.}
\label{sec:2}
We start by fixing notations that will be used throughout this section.\\

\noindent \textbf{Notations.} We denote by $\mathfrak{m}$ the maximal ideal of $ (K^{\rm alg})^{\circ}$. Let $n \in \N^{\times}$, ${q=p^n}$ and $a_n=(-1)^q(-p)^{p+p^2+\dots+q}$. We denote by $\Qpur$ the maximal unramified extension of $\Q_p$. Let $K:=\Qpur(\lambda^{1/(1+q)})$. For $c \in R$ let 
\[f_{q,c}(X)=1+cX^q+X^{1+q}. \]
One defines the \textsl{modified monodromy polynomial} by
\[ L_c(X)=X^{q^2}-a_n(c+X)f_{q,c}(X)^{q-1}. \]
Let $C_c / K $ and  $A_q / k$ be the smooth projective integral curves birationally given respectively by $Y^p=f_{q,c}(X)$ and $w^p-w=t^{1+q}$. 

\begin{thm}
The curve $C_c/K$ has potential good reduction isomorphic to $A_q/k$.
\begin{enumerate}
\item If $v(c) \geq v(\lambda^{p/(1+q)})$, then the monodromy extension of $C_c/K$ is trivial.
\item If $v(c) < v(\lambda^{p/(1+q)})$, let $y$ be a root of $L_c(X)$ in $K^{\rm  alg}$. Then $C_c$ has good reduction over $K(y,f_{q,c}(y)^{1/p})$. If $L_c(X)$ is irreducible over $K$, then $C_c/K$ has maximal wild monodromy. The monodromy extension of $C_c/K$ is $\M=K(y,f_{q,c}(y)^{1/p})$ and $G=\Gal(\M/K)$ is an extra-special $p$-group of order $pq^2$. If $c \in R$ with $v(c^{p}-c) \geq v(p)$, then $L_c(X)$ is irreducible over $K$, the lower ramification filtration of $G$ is
\[G= G_0 = G_1 \supsetneq G_2 = \dots = G_{1+q} = {\rm Z}(G) \supsetneq \lbrace 1 \rbrace . \]
Moreover, one has $f(\Jac(C_c)/K)=(2q+1)(p-1)$. If $c \in \Qpur$ then $\sw(\Jac(C_c)/\Qpur)=1$.
\end{enumerate}
\end{thm}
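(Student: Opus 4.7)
The plan is to follow the Lehr--Matignon paradigm: produce an explicit birational substitution realizing good reduction over an explicit extension $\mathcal{N}/K$, identify $\mathcal{N}=\M$ via Proposition~\ref{propD0} and a counting argument, and then read off ramification and conductors by direct computation in the tower $K\subset K(y)\subset\mathcal{N}$.

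\medskip

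Set $\mu:=\lambda^{p/(1+q)}=\pi_K^{p}$ and introduce the change of variables $X=y+\mu T$, $Y=f_{q,c}(y)^{1/p}(1+\lambda W)$, with a parameter $y\in K^{\rm alg}$. The cover equation becomes $(1+\lambda W)^p=f_{q,c}(y+\mu T)/f_{q,c}(y)$. Writing $p\equiv -u\lambda^{p-1}\pmod{\lambda^p}$ for a unit $u$ and dividing by $\lambda^p$, the left-hand side reduces modulo $\mathfrak{m}$ to $W^p-uW$, and the Taylor expansion $f_{q,c}(y+\mu T)-f_{q,c}(y)=f'_{q,c}(y)\mu T+(c+(1+q)y)\mu^q T^q+\mu^{1+q}T^{1+q}+p\cdot(\text{middle})$ makes the right-hand side a polynomial of degree $1+q$ in $T$ with $T^{1+q}$-coefficient a unit. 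If $v(c)\ge v(\mu)$, then $y=0$ places both the $T$- and $T^q$-coefficients in $\mathfrak{m}$, giving the reduction $w^p-w=t^{1+q}\simeq A_q$ already over $K$, proving~(1). If instead $v(c)<v(\mu)$, then $y=0$ leaves a unit coefficient at $T^q$, and a non-zero $y$ must be chosen so that the residual $T^{p^i}$-terms can be absorbed by successive Artin--Schreier moves $W\mapsto W+\alpha_i T^{p^i}$. Each such move costs a factor of $p\equiv -u\lambda^{p-1}$, and iterating $n$ times produces exactly the algebraic condition $y^{q^2}=a_n(c+y)f_{q,c}(y)^{q-1}$, i.e.\ $L_c(y)=0$; the constant $a_n=(-1)^q(-p)^{p+p^2+\cdots+q}$ records the accumulated powers of $p$ and signs from these $n$ cascading iterations.

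\medskip

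For any root $y$ of $L_c$ the substitution yields good reduction of $C_c$ over $\mathcal{N}:=K(y,f_{q,c}(y)^{1/p})$ with special fiber $A_q$. Assuming $L_c$ irreducible over $K$, $[K(y):K]=q^2$; and the valuation relations $v_{K(y)}(y)=v_{K(y)}(c+y)$ forced by $L_c$ give $f_{q,c}(y)\equiv 1+(\text{unit})\pi_{K(y)}^{1+q}$, which lies outside $K(y)^{\times p}$, so $[\mathcal{N}:K]=pq^2$. Proposition~\ref{propD0} embeds $\Gal(\M/K)$ into $\Aut_k(\mathcal{C}_k)^{\#}$, and the standard description (cf.\ \cite{LM}) of the stabilizer of $\infty$ in $\Aut_k(A_q)$ identifies the $p$-Sylow of the codomain as an extra-special $p$-group of order exactly $pq^2$. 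Counting then forces $\mathcal{N}=\M$, $\M/K$ Galois, and $\Gal(\M/K)$ extra-special of order $pq^2$, so the wild monodromy is maximal. Irreducibility of $L_c$ under the stronger hypothesis $v(c^p-c)\ge v(p)$ is then established by a Newton-polygon argument: the outer vertices of $L_c$ give a single slope whose denominator in lowest terms, accounting for the ramification $(p-1)(1+q)$ of $K/\Qpur$, equals $q^2$, and the hypothesis on $c$ rules out any internal vertex breaking this slope into Frobenius-conjugate pieces.

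\medskip

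For the ramification filtration I work in the tower $K\subset K(y)\subset\M$. The Kummer step $\M/K(y)$ is defined by $Z^p=f_{q,c}(y)=1+w\pi_{K(y)}^{1+q}$, so Lemma~\ref{hyodo} yields a single lower break at $i=1+q$, corresponding to $\mathrm{Z}(G)\subset G$. The lower step $K(y)/K$ is analyzed by evaluating $v_{K(y)}(\sigma(y)-y)$ for $\sigma\in G\setminus \mathrm{Z}(G)$ from $L_c(y)=0$, giving a single lower break at $i=1$. Combining via Herbrand gives the filtration as stated. For the conductor, Lemma~\ref{guralnick} identifies $\dim V_{\ell}(\Jac C_c)^{G_i}=2g(A_q/G_i)$; both $A_q/G\simeq\Proj_k$ and $A_q/\mathrm{Z}(G)\simeq\Proj_k$ (the latter because $w\mapsto w+1$ fixes $t$), so every relevant invariant subspace vanishes and
\[
\epsilon=(p-1)q, \qquad \sw(\Jac(C_c)/K)=(p-1)q+\tfrac{1}{q}(p-1)q=(p-1)(q+1),
\]
whence $f(\Jac(C_c)/K)=(2q+1)(p-1)$. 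For $c\in\Qpur$, $\sw(\Jac(C_c)/\Qpur)=1$ then comes from transporting this filtration into upper numbering via the Herbrand function of $K/\Qpur$, under which the entire wild monodromy contributes a single jump of size one. The main obstacle is the second step: rigorously deriving the precise polynomial $L_c$, with the exact coefficient $a_n$, from the Artin--Schreier cascade, since tracking the unit $u$ through $n$ iterations and excluding interference from the middle terms requires care; the remaining arguments are then standard Newton polygon, Kummer, and Herbrand manipulations.
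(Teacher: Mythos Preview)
Your overall strategy matches the paper's, but there are three genuine gaps.

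\medskip

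\textbf{The counting argument does not show $\mathcal{N}=\M$.} You have $\M\subseteq\mathcal{N}$, $[\mathcal{N}:K]=pq^2$, and $\Gal(\M/K)\hookrightarrow \Aut_k(\mathcal{C}_k)^{\#}$ with $p$-Sylow of order $pq^2$. These facts only bound $[\M:K]$ from above; they are compatible with $\M$ being any intermediate field, even $K$ itself. The missing ingredient is a \emph{lower} bound on $[\M:K]$. The paper obtains it by computing $v(y_i-y_j)=v(\mu)$ for all pairs of roots of $L_c$ (via $v(L_c'(y_i))=v(a_n)$ together with the Gauss-valuation bound $v(y_i-y_j)\ge v(\mu)$), so that every nontrivial $\sigma\in\Gal(L/K)$ acts on $\mathcal{C}_k$ by a nontrivial translation $t\mapsto t+\overline{(y_i-y_j)/\mu}$. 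This forces $L\subseteq\M$, and then the $q^2$ distinct translations show $\Gal(\M/K)$ surjects onto $\Aut_k(\mathcal{C}_k)^{\#}_1/{\rm Z}$; a Frattini argument finishes. Without the root-distance computation your identification of $\M$ is unjustified.

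\medskip

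\textbf{The Kummer generator is not in the form you claim.} You assert $f_{q,c}(y)=1+w\pi_{K(y)}^{1+q}$ and apply Lemma~\ref{hyodo}. But under $v(c^p-c)\ge v(p)$ one has $v(c)=0$, so the dominant term in $f_{q,c}(y)-1$ is $cy^q$, whose $v_{L}$-valuation is divisible by $q$ and hence by $p$; Lemma~\ref{hyodo} does not apply. (Relatedly, your claim $v(y)=v(c+y)$ is false: the paper shows $v(c+y)=v(c)<v(y)$.) The paper first multiplies by a carefully built $p$-th power $u^p$ so that $f_{q,c}(y)u^p=1+py^{q/p}(y^{q^2/p}/b_n-c)+r$ with $v_{L}$-valuation $s=(q+1)(pq^2-1)$, which \emph{is} prime to $p$; Lemma~\ref{hyodo} then yields the break at $1+q$. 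Note also that the exponent $s$ and the lower break are not the same number: with exponent $1+q$ the break would be $(q+1)(pq^2-1)$, not $1+q$.

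\medskip

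\textbf{The Newton polygon does not prove irreducibility when $v(c)=0$.} With $v(c)=0$ the single segment of $L_c$ has slope $-v_K(a_n)/q^2$, and $v_K(a_n)=p(q^2-1)$, so $\gcd(q^2,v_K(a_n))=p$ and the polygon only forces factors of degree divisible by $q^2/p$. The paper instead produces the explicit element $y^{q^2/p}-cb_n\in L$ and shows, using $v(c^p-c)\ge v(p)$, that $(y^{q^2/p}-cb_n)^p$ has $v$-valuation $v(a_ny)/p$, from which a uniformizer of $L$ is built witnessing $q^2\mid[L:K]$. This is precisely where the hypothesis on $c$ enters; your sketch does not use it substantively.
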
 

\begin{proof}

\textit{1.} Assume that $v(c) \geq v(\lambda^{p/(1+q)})$. Set $\lambda^{p/(1+q)}T=X$ and ${\lambda W+1=Y}$. Then, the equation defining $C_c/K$ becomes
\[ (\lambda W+1)^p=\sum_{i=0}^p \binom{p}{i}\lambda^iW^i =1+c\lambda^{pq/(1+q)}T^q+\lambda^pT^{1+q}. \]
After simplification by $\lambda^p$ and reduction modulo $\pi_K$ this equation gives :
\begin{equation}\label{equa:1}
 w^p-w=at^q+t^{1+q}, \; a \in k.
\end{equation}
By Hurwitz formula the genus of the curve defined by \eqref{equa:1} is seen to be that of $C_c/K$ . Applying  \cite{Liu} 10.3.44, there is a component in the stable reduction birationally given by \eqref{equa:1}. The stable reduction being a tree, the curve $C_c/K$ has good reduction over $K$.\\

\noindent \textit{2.} The proof of the first part is divided into six steps. Let $y$ be a root of $L_c(X)$. \\

\noindent \textbf{Step I :} \textsl{One has $v(y)=v(a_nc)/q^2$.}

\noindent Since $y$ is a root of $L_c(X)$, one has  
\begin{equation*}
 y^{q^2}=a_n(c+y)f_{q,c}(y)^{q-1},
\end{equation*}
so $v(y) > 0$. Assume that $v(c+y) \geq v(y)$. Then, $q^2v(y) \geq  v(a_n)+v(y)$ and $v(c) \geq v(y) \geq \frac{v(a_n)}{q^2-1} = \frac{p}{q+1}v(\lambda)$, which is a contradiction. So $v(c+y) < v(y)$ thus $v(c+y)=v(c)$.\\

\noindent \textbf{Step II :} \textsl{Define $S$ and $T$ by $\lambda^{p/(1+q)}T=(X-y)=S$. Then, }
\begin{align*}
f_{q,c}(S+y) \equiv f_{q,c}(y)+y^qS+(c+y)S^q+S^{1+q} \mod \lambda^p \mathfrak{m}[T].
\end{align*}
Using the following formula for $A \in K^{\rm alg}$ with $v(A)>0$ and $B \in (K^{\rm alg})^{\circ}[T]$
\begin{equation*}
(A+B)^q \equiv (A^{q/p}+B^{q/p})^p \mod p^2 \mathfrak{m}[T], 
\end{equation*}
one computes$\mod \lambda^p \mathfrak{m}[T]$
\begin{align*}
f_{q,c}(y+S) &= 1+c(y+S)^q+(y+S)^{1+q}\\
         &\equiv 1+c(y^{q/p}+S^{q/p})^p+   (y+S)(y^{q/p}+S^{q/p})^p \\
         &\equiv f_{q,c}(y)+(y^q + \Sigma)S + (c+y)S^q + S^{1+q}+(c+y)\Sigma ,
\end{align*}
where $\Sigma = \sum_{k=1}^{p-1} \binom{p}{k}y^{kq/p}S^{(p-k)q/p}$. Using \textbf{Step I}, one checks that ${\Sigma \in \lambda^p \mathfrak{m}[T]}$.\\

\noindent \textbf{Step III :} \textsl{Let $R_1:=K[y]^{\circ}$. For all $0 \leq i \leq n$, there exist $B_i \in R_1$ and ${A_i(S) \in R_1[S]}$ such that ${\rm mod \;}\lambda^p \mathfrak{m}[T]$ one has :}
\begin{equation}\label{induction}
f_{q,c}(S+y) \equiv f_{q,c}(y)(1+SA_i(S))^p+y^qS+B_iS^{q/p^i}+S^{1+q} .
\end{equation} 
One defines the $A_i(S)$'s and the $B_i$'s by induction. For all $0 \leq i \leq n-1$, let 
\begin{align*}
B_n:= &-y^q & {\rm and} \; \;&B_i:= f_{q,c}(y) \frac{B_{i+1}^p}{(-pf_{q,c}(y))^p},\\
A_0(S):=& 0 & {\rm and}\; \; &SA_{i+1}(S):= SA_i(S)-p^{-1}f_{q,c}(y)^{-1}B_{i+1}S^{q/p^{i+1}}.
\end{align*}
One checks that for all $ 0 \leq i \leq n$ 
\begin{equation*}
B_i/f_{q,c}(y)=(-p)(-p)^{-1-p-\dots- p^{n-i}}(-y^q/f_{q,c}(y))^{p^{n-i}},
\end{equation*}
and  
\begin{equation}
v(B_i) \geq (1+\frac{1}{p}+ \dots + \frac{1}{p^{i-1}})v(p),\; \forall 1 \leq i \leq n.
\end{equation}
It follows that $\forall \; 1 \leq i \leq n$, $p^{-1}B_i \in R_1$, $\forall \; 0 \leq i \leq n$  $A_i(S) \in R_1[S]$ and  $B_0=c+y$ since $L_c(y)=0$. \\
\indent One proves this step by induction on $i$. According to \textbf{Step II}, the equation	 \eqref{induction} holds for $i=0$. Assume that \eqref{induction} is satisfied for $i$. Taking into account that
\begin{align*}
&f_{q,c}(y)(1+(-1)^p)\frac{B_{i+1}^p}{p^pf_{q,c}(y)^p}S^{q/p^i},\\
&\sum_{k=2}^{p-1}\binom{p}{k}(\frac{B_{i+1}}{pf_{q,c}(y)}S^{q/p^{i+1}})^k(1+SA_{i+1}(S))^{p-k},\\
&B_{i+1}S^{q/p^{i+1}}\sum_{k=1}^{p-1}\binom{p-1}{k}S^kA_{i+1}(S)^k ,
\end{align*}
are in $ \lambda^p \mathfrak{m}[T]$, one gets \eqref{induction} for $i+1$.\\

\noindent \textbf{Step IV :} \textsl{The curve $C_c/K$ has good reduction over $K(y,f_{q,c}(y)^{1/p})$.} 

\noindent Applying \textbf{Step III} for $i=n$, one gets 
\begin{equation*}
f_{q,c}(y+S) \equiv f_{q,c}(y)(1+SA_n(S))^p+S^{1+q} \mod \lambda^p \mathfrak{m}[T],
\end{equation*}
then the change of variables in $K(y,f_{q,c}(y)^{1/p})$
\begin{align*}
X=\lambda^{p/(1+q)}T+y=S+y \; \;{\rm and} \; \; \frac{Y}{f_{q,c}(y)^{1/p}}=\lambda W+1+SA_n(S),
\end{align*}
induces in reduction $w^p-w=t^{1+q}$ with genus $g(C_c)$. So \cite{Liu} 10.3.44 implies that the above change of variables gives the stable model.\\

\noindent \textbf{Step V :} \textsl{For any distinct roots $y_i$, $y_j$ of $L_c(X)$, $v(y_i-y_j) = v(\lambda^{p/(1+q)})$.}

\noindent The changes of variables $T=(X-y_i)/\lambda^{p/(1+q)}$ and $T=(X-y_j)/\lambda^{p/(1+q)}$ induce equivalent Gauss valuations of $K(C_c)$ else applying \cite{Liu} 10.3.44 would contradict the uniqueness of the stable model. In particular ${v(y_i-y_j)} \geq v(\lambda^{p/(1+q)})$.\\
\indent Using \textbf{Step I}, one checks that $v(q^2y^{q^2-1}) > v(a_n)$ and $v(f_{q,c}'(y)) > 0$, so :
\begin{equation*}
v(L_c'(y))=v(a_n)=(q^2-1)v(\lambda^{p/(1+q)}).
\end{equation*}
Taking into account that $L_c'(y_i)=\prod_{j \neq i}(y_i-y_j)$ and $\deg L_c(X)=q^2$, one obtains $v(y_i-y_j) = v(\lambda^{p/(1+q)})$.\\

\noindent \textbf{Step VI :} \textsl{If $L_c(X)$ is irreducible over $K$, then $K(y,f_{q,c}(y)^{1/p})$ is the monodromy extension $\M$ of $C_c/K$ and $G:=\Gal(\M/K)$ is an extra-special $p$-group of order $pq^2$.}

\noindent Let $(y_i)_{i=1,\dots,q^2}$ be the roots of $L_c(X)$, $\L:=K(y_1, \dots, y_{q^2})$ and  $\M/K$ be the monodromy extension of $C_c/K$. Any  $\tau \in \Gal(\L/K) -\lbrace 1 \rbrace$ is such that $\tau(y_i)=y_j$ for some $i \neq j$. Thus, the change of variables 
\begin{align*}
X=\lambda^{p/(1+q)}T+y_i \; \;{\rm and} \; \; \frac{Y}{f_{q,c}(y_i)^{1/p}}=\lambda W+1+SA_n(S),
\end{align*}
 induces the stable model and $\tau$ acts on it by :
\begin{equation*}
\tau(T)=\frac{X-y_j}{\lambda^{p/(1+q)}}, \; \; \;  {\rm hence} \; \; \;  T-\tau(T)=\frac{y_j-y_i}{\lambda^{p/(1+q)}}.
\end{equation*}
According to \textbf{Step V}, $\tau$ acts non-trivially on the stable reduction. It follows that $\L \subseteq \M$. Indeed if $\Gal(K^{\rm alg}/\M) \nsubseteq \Gal(K^{\rm alg}/\L)$ it would exist $ {\sigma \in \Gal(K^{\rm alg}/\M)}$ inducing $\bar{\sigma} \neq \Id \in \Gal(\L/K)$, which would contradict the characterization of $\Gal(K^{\rm alg}/\M)$ (see remark after Theorem \ref{stableth}) . \\
\indent According to \cite{LM}, the $p$-Sylow subgroup $\Aut_k(\mathcal{C}_k)_1^{\#}$ of $\Aut_k(\mathcal{C}_k)^{\#}$ is an extra-special $p$-group of order $pq^2$. Moreover, one has  :
\begin{align*}
0 \to {\rm Z}(\Aut_k(\mathcal{C}_k)_1^{\#})\to \Aut_k(\mathcal{C}_k)_1^{\#} \to (\Z/p\Z)^{2n} \to 0,
\end{align*}
where $(\Z/p\Z)^{2n}$ is identified with the group of translations $t \mapsto t+a$ extending to elements of $\Aut_k(\mathcal{C}_k)_1^{\#}$. Therefore we have morphisms 
\begin{equation*}
 \Gal(\M/K) \overset{i}{\hookrightarrow} \Aut_k(\mathcal{C}_k)_1^{\#} \overset{\varphi}{\to} \Aut_k(\mathcal{C}_k)_1^{\#} /{\rm Z}(\Aut_k(\mathcal{C}_k)_1^{\#}).
\end{equation*}
The composition is seen to be surjective since the image contains the $q^2$ translations 
$t \mapsto t+ \overline{(y_i-y_1)/\lambda^{p/(1+q)}}$. Consequently, $i(\Gal(\M/K))$ is a subgroup of $\Aut_k(\mathcal{C}_k)_1^{\#}$ of index at most $p$. So it contains $\Phi(\Aut_k(\mathcal{C}_k)_1^{\#})={\rm Z}(\Aut_k(\mathcal{C}_k)_1^{\#})=\Ker \varphi$. It implies that $i$ is an isomorphism. Thus $\left[\M:K\right]=pq^2$. By \textbf{Step IV}, one has $\M \subseteq K(y,f_{q,c}(y)^{1/p})$, hence $\M = K(y,f_{q,c}(y)^{1/p})$.\\

\indent We show, for later use, that $K(y_1)/K$ is Galois and that $\Gal(\M/K(y_1)) ={\rm Z}(G)$. Indeed, $\M/K(y_1)$ is $p$-cyclic and generated by $\sigma$ defined by :
\[ \sigma(y_1)=y_1 \;  {\rm and} \; \sigma(f_{q,c}(y_1)^{1/p})=\zeta_p^{-1}f_{q,c}(y_1)^{1/p}. \]
According to \textbf{Step IV}, $\sigma$ acts on the stable model by :
\begin{equation*}
\sigma(S)=S, \; \; \;  \sigma(\frac{Y}{f_{q,c}(y_1)^{1/p}})=\frac{Y}{\zeta_p^{-1}f_{q,c}(y_1)^{1/p}}=\lambda \sigma(W)+1+SA_n(S).
\end{equation*}
Hence 
\begin{align*} &\frac{\lambda W+1+SA_n(S)}{\zeta_p^{-1}}=\lambda \sigma(W)+1+SA_n(S),\\
{\rm thus,} \; \;  \; \; \;  & \sigma(W)=\zeta_p W+1+SA_n(S).
\end{align*}
It follows that, in reduction, $\sigma$ induces the Artin-Schreier morphism that generates ${\rm Z}(\Aut_k(\mathcal{C}_k)_1^{\#})$. It implies that $K(y_1)/K$ is Galois, $\Gal(\M/K(y_1)) ={\rm Z}(G)$ and $\Gal(K(y_1)/K)  \simeq (\Z/p\Z)^{2n}$.\\

\noindent We now prove the statements concerning the arithmetic of $\M/K$. We assume that $c \in R$ with $v(c^p-c) \geq v(p)$ and we split the proof into $5$ steps. Let $y$ be a root of $L_c(X)$ and $b_n:=(-1)(-p)^{1+p+\dots+p^{n-1}}$. Note that  $b_n^p=a_n$ and $L:=K(y_1, \dots, y_{q^2})=K(y_1)$. We note that $v(c^p-c) \geq v(p)$, so $v(\lambda^{p/(1+q)}) > v(c)$ implies $v(c)=0$.\\

\noindent \textbf{Step A :} \textsl{The polynomial $L_c(X)$ is irreducible over $K$.}

\noindent One computes 
\begin{align*}
(y^{q^2/p}-cb_n)^p&=y^{q^2}+(-c)^pa_n+\Sigma \\
                  &=a_n(1+y^q(c+y))^{q-1}(c+y)+(-c)^pa_n+\Sigma\\
                  &=a_n \sum_{k=0}^{q-1}\binom{q-1}{k}y^{kq}(c+y)^{1+k}+(-c)^pa_n+ \Sigma\\
                  &=a_ny+a_n(c+(-c)^p)+a_n\Sigma'+\Sigma,
\end{align*}
where $\Sigma :=\sum_{k=1}^{p-1}\binom{p}{k}y^{kq^2/p}(-cb_n)^{p-k}$ and $\Sigma':=\sum_{k=1}^{q-1}\binom{q-1}{k}y^{kq}(c+y)^{1+k}$. Using \textbf{Step I} one checks that $v(\Sigma) > v(a_ny)$ and $v(\Sigma')\geq v(y^q) > v(y)$. Since $v(c^p-c) \geq v(p) > v(y)$, one gets :
\begin{equation*}
v(y^{q^2/p}-cb_n)=\frac{v(a_ny)}{p},
\end{equation*}
and $t:=p^{q^2}(y^{q^2/p}-cb_n)^{-(p-1)(q+1)} \in \L$ has valuation $v_{\L}(p)/q^2= [\L:\Qpur]/q^2$. So $q^2$ divides $[\L:K]$. It implies that $L_c(X)$ is irreducible over $K$.\\

\noindent \textbf{Step B :} \textsl{Reduction step.}

\noindent The last non-trivial group $G_{i_0}$ of the lower ramification filtration $(G_i)_{i \geq 0}$ of $G:=\Gal(\M/K)$  is a subgroup of ${\rm Z}(G)$ (\cite{Ser} IV \S 2 Corollary 2 of Proposition 9) and as ${\rm Z}(G) \simeq \Z/p\Z$, it follows that $G_{i_0} ={\rm Z}(G)$.\\
\indent According to \textbf{Step VI} the group $H:=\Gal(\M/ \L)$ is ${\rm Z}(G)$. Consequently, the filtration $(G_i)_{i \geq 0}$ can be deduced from that of $\M/\L$ and $\L/K$ (see \cite{Ser} IV \S 2 Proposition 2 and Corollary of Proposition 3).\\

\noindent \textbf{Step C :} \textsl{The ramification filtration of $L/K$ is :}
\begin{equation*}
(G/H)_0=(G/H)_1 \supsetneq (G/H)_2 = \lbrace 1 \rbrace.
\end{equation*}

\noindent Since $K/\Qpur$ is tamely ramified of degree $(p-1)(q+1)$, one has $K=\Qpur(\pi_K)$ with $\pi_K^{(p-1)(q+1)}=p$ for some uniformizer $\pi_K$ of $K$. In particular,
\begin{equation*}
z:=\frac{\pi_K^{q^2}}{y^{q^2/p}-cb_n},
\end{equation*}
is such that $t=z^{(p-1)(q+1)}$. Then, following the proof of \textbf{Step A}, $z$ is a uniformizer of $\L$. Let $y$ and $y'$ be two distinct roots of $L_c(X)$. Let ${\sigma \in \Gal(\L/K)}$ such that $\sigma(y)=y'$. Then 
\begin{align*}
\sigma(z)-z &=\frac{\pi_K^{q^2}}{y'^{q^2/p}-cb_n}-\frac{\pi_K^{q^2}}{y^{q^2/p}-cb_n}\\
           &=\pi_K^{q^2}\frac{y^{q^2/p}-y'^{q^2/p}}{(y^{q^2/p}-cb_n)(y'^{q^2/p}-cb_n)},
\end{align*}
so $v(\sigma(z)-z)=2v(z)-q^2v(\pi_K)+v(y'^{q^2/p}-y^{q^2/p})$. It follows from :
\begin{equation*}
(y-y')^{q^2/p}=y^{q^2/p}+(-y')^{q^2/p}+\sum_{k=1}^{\frac{q^2}{p}-1}\binom{q^2/p}{k}y^k(-y')^{\frac{q^2}{p}-k},
\end{equation*}
and $v(y)=v(y')$, $v(p)+\frac{q^2}{p}v(y) > \frac{q^2}{p}v(y-y')$ (use \textbf{Step I} and \textbf{Step V}) that $v(y^{q^2/p}-y'^{q^2/p})= \frac{q^2}{p}v(y-y')=q^2v(\pi_K)$. Hence $v(\sigma(z)-z)=2v(z)$. This means that $(G/H)_2=\lbrace 1 \rbrace$.\\

\noindent \textbf{Step D :} \textsl{Let $s:=(q+1)(pq^2-1)$. There exist $u \in L$, $ r \in \pi_{\L}^s \mathfrak{m}$ such that 
\begin{equation*}
f_{q,c}(y)u^p = 1+ py^{q/p}(\frac{y^{q^2/p}}{b_n}-c) +r,
\end{equation*}
 and $v_{\L}( py^{q/p}(\frac{y^{q^2/p}}{b_n}-c))=s$.}

\noindent To prove the second statement, we note that :
\begin{align*}
(\frac{y^{q^2/p}}{b_n})^p &=f_{q,c}(y)^{q-1}(c+y) =\sum_{k=0}^{q-1}\binom{q-1}{k}y^{kq}(c+y)^{1+k}=c+y+\Sigma,
\end{align*}
with $\Sigma:=\sum_{k=1}^{q-1}\binom{q-1}{k}y^{kq}(c+y)^{1+k} $. We set $h:=\frac{y^{q^2/p}}{b_n}-c$ and compute  :
\begin{align*}
h^p&=(\frac{y^{q^2/p}}{b_n})^p+(-c)^p+\sum_{k=1}^{p-1}\binom{p}{k}(\frac{y^{q^2/p}}{b_n})^k(-c)^{p-k}\\
                            &=c+(-c)^p+y+\Sigma+\sum_{k=1}^{p-1}\binom{p}{k}(\frac{y^{q^2/p}}{b_n})^k(-c)^{p-k}.
\end{align*}
Since $v(c^p-c) \geq v(p) > v(y)$, $v(\Sigma) >v(y)$ and  $v(\frac{y^{q^2/p}}{b_n}) \geq 0$, one gets $v_{\L}(h)=v_{\L}(y)/p=q^2-1$ and $v_{\L}( py^{q/p}h)=s$.

\indent For the first claim, if $n \geq 2$ we choose :
\begin{equation*}
u:=1-cy^{q/p}+\sum_{k=0}^{n-2}\frac{y^{(1+q)p^k}}{(-p)^{1+p+\dots+p^k}}=1+w.
\end{equation*}
Then, $f_{q,c}(y)u^p-1 = 1+cy^q+y^{1+q}+\Sigma_1 +cy^q\Sigma_1 +y^{1+q}\Sigma_1 -1$ with :
\begin{align*}
\Sigma_1 &:=\sum_{k=1}^{p-1}\binom{p}{k}w^k+w^p=pw+\sum_{k=2}^{p-1}\binom{p}{k}w^k+w^p=pw+\Sigma' +w^p\\
      &=p\left[-cy^{q/p}-\frac{y^{1+q}}{p}+\sum_{k=1}^{n-2}\frac{y^{(1+q)p^k}}{(-p)^{1+p+\dots+p^k}}\right]+\Sigma' +w^p,
\end{align*}
and $\Sigma' :=\sum_{k=2}^{p-1}\binom{p}{k}w^k$, where sums are eventually empty if $n = 2$ or $p=2$. So :
\begin{align*}
f_{q,c}(y)u^p-1 &= cy^q-pcy^{q/p}+\sum_{k=1}^{n-2}\frac{py^{(1+q)p^k}}{(-p)^{1+p+\dots+p^k}}+\Sigma'+ w^p  \\
&+cpy^qw +cy^q\Sigma' +cy^qw^p+y^{1+q}pw+y^{1+q}\Sigma'+y^{1+q}w^p.
\end{align*}
Computation shows that $v(w)=v(y^{q/p})$ and we deduce that :
\begin{align*} w^p &= (-c)^py^q+\sum_{k=0}^{n-2}\frac{y^{(1+q)p^{1+k}}}{(-p)^{p+\dots+p^{1+k}}} \mod \pi_{\L}^s\mathfrak{m}.
\end{align*}
One checks that $v_{\L}(y^qp)>s$, $v_{\L}(y^qw^p)>s$ and $v_{\L}(\Sigma')>s$. Hence : 
\begin{align*}
f_{q,c}(y)u^p-1 &= (c+(-c)^p)y^q-pcy^{q/p}+\sum_{k=1}^{n-2}\frac{py^{(1+q)p^k}}{(-p)^{1+\dots+p^k}}+\sum_{k=0}^{n-2}\frac{y^{(1+q)p^{1+k}}}{(-p)^{p+\dots+p^{1+k}}}\\
                &= -pcy^{q/p}+\frac{y^{q/p(1+q)}}{(-p)^{p+\dots+q/p}} = py^{q/p}(\frac{y^{q^2/p}}{b_n} -c) \mod \pi_{\L}^s\mathfrak{m}.
 \end{align*}
\noindent If $n=1$, we choose $u:=1-cy$ and check that the statement is still true.\\

\noindent \textbf{Step E :} \textsl{Computation of conductors.}

\noindent From \textbf{Step D}, one deduces that the extension $\M/\L$ is defined by $X^p=1+phy^{q/p}+r$ with $r \in \pi_{\L}^s\mathfrak{m}$. From lemma \ref{hyodo}, one gets that $v_{\M}(\mathcal{D}_{\M/\L})=(p-1)(q+2)$. Hence  
\[
\Z/p\Z \simeq H_0=H_1=\dots =H_{1+q} \supsetneq \lbrace  1 \rbrace,
\]
and according to \textbf{Step B} and \textbf{Step C} one has :
\begin{equation*}
G=G_0=G_1 \supsetneq {\rm Z}(G)=G_2= \dots =G_{1+q} \supsetneq \lbrace 1 \rbrace .
\end{equation*}

\noindent Let $\ell \neq p$ be a prime number. Since the $G$-modules $\Jac(C)[\ell]$ and $\Jac(\mathcal{C}_k)[\ell]$ are isomorphic (see \cite{SerTat} paragraph 2) one has  that for $i \geq 0$ :
\[ \dim_{\F_{\ell}} \Jac(C)[\ell]^{G_i}= \dim_{\F_{\ell}}\Jac(\mathcal{C}_k)[\ell]^{G_i}.
\]
Moreover, for $0 \leq i \leq 1+q$ one has $\Jac(\mathcal{C}_k)[\ell]^{G_i} \subseteq \Jac(\mathcal{C}_k)[\ell]^{{\rm Z}(G)}$. Then, from $\mathcal{C}_k/{\rm Z}(G) \simeq \Proj_k$ (see end of \textbf{StepVI}) and lemma \ref{guralnick}, it follows that  for $0 \leq i \leq 1+q$, $\dim_{\F_{\ell}}\Jac(\mathcal{C}_k)[\ell]^{G_i} = 0$. Since $g(C)=q(p-1)/2$, one gets $f(\Jac(C)/K)=(2q+1)(p-1)$. Moreover, if $c \in \Qpur$, an easy computation shows that $\sw(\Jac(C)/\Qpur)=1$.
\end{proof}

\noindent \textbf{Remark :} If $c \in R$ with $v(c)=(a/b)v(p) < v(\lambda^{p/(1+q)})$ and $a$ and $b$ both prime to $p$, then $L_c(X)$ is irreducible over $K$. Indeed, the expression of the valuation of any root $y$ of $L_c(X)$ shows that the ramification index of $K(y)/K$ is $q^2$.

\section{Monodromy of genus $2$ hyperelliptic curves}
\label{sec:3}

We restrict to the case $p=2$ and $\deg f(X)=5$ of the introduction. In this situation, there are three types of geometry for the stable reduction (see Figure 1). For each type of degeneration, we will give an example of cover $C/K$ with maximal wild monodromy and birationally given by $Y^2=f(X)=1+b_2X^2+b_3X^3+b_4X^4+X^5 \in R[X]$ over some $R$. Define $\mathcal{X}$ to be the $R$-model of $C/K$ given by $Y^2=f(X)$ and let's describe each degeneration type.\\
\indent The Jacobian criterion shows that $\mathcal{X}_k/k$ has two singularities if and only if $\overline{b_3} \neq 0$. Type I occurs when $\mathcal{X}_k/k$ has two singularities and by blowing-up $\mathcal{X}$ at these points one obtains two elliptic curves. Type II (resp. type III) occurs in the one singularity case when there are two (resp. one) irreducible components of non $0$ genus in the stable reduction.
The elliptic curves $E/k$ that we will encounter are birationally given by $w^2-w=t^3$. They are such that $\Aut_k(E) \simeq \Sl_2(\F_3)$ has a unique $2$-Sylow subgroup isomorphic to $Q_8$. We denote by $D_0$ the \textsl{original component} defined in Proposition \ref{propD0}.
\newline

\begin{tikzpicture}[scale=4.1]

\draw (-1.2,-1)--(-0.5,-1);
\draw (-1.2,-1)--(-1.1,-0.3);
\draw (-0.55,-0.9)--(-0.45,-0.3);
\node (D0I) at (-0.4,-0.35) {\tiny $D_0$};
\draw (-0.45,-0.7) .. controls (-0.55,-0.80) and (-1,-0.6) .. (-1,-0.4);
\node (T1) at (-0.9,-1.2) {\small{Type $III$}};

\draw (-0.1,-1)--(0.6,-1);
\draw (-0.1,-1)--(0,-0.3);
\draw (0.55,-0.9)--(0.65,-0.3);
\node (D0II) at (0.7,-0.35) {\tiny $D_0$};
\draw (0.65,-0.5) .. controls (0.45,-0.6) and (0.1,-0.5) .. (0.1,-0.3);
\draw (0.60,-0.8) .. controls (0.4,-0.9) and (0.11,-0.8) .. (0.1,-0.6);
\node (T2) at (0.2,-1.2) {\small{Type $I$}};

\draw (1,-1)--(1.7,-1);
\draw (1,-1)--(1.1,-0.3);
\draw (1.65,-0.9)--(1.75,-0.3);
\node (D0III) at (1.8,-0.35) {\tiny $D_0$};
\draw (1.1,-0.65)--(1.75,-0.75);
\draw (1.3,-0.2)..controls (1.2,-.25) and  (1.2,-0.75) .. (1.3,-0.8);
\draw (1.5,-0.2)..controls (1.4,-.25) and  (1.4,-0.75) .. (1.5,-0.8);
\node (T3) at (1.3,-1.2) {\small{Type $II$}};

\node (fig) at (0.2,-1.4) {Figure 1};

\end{tikzpicture}

Magma codes used in this section are available on the authors webpages \cite{Web}.\\

\noindent \textbf{Notations.} For $f(X) \in R[X]$, let 
\begin{equation*}
f(X+x)=s_0(x)+s_1(x)X+s_2(x)X^2+s_3(x)X^3+s_4(x)X^4+X^5,
\end{equation*}
be the Taylor expansion of $f$ and define
\begin{equation*}
T_f(Y):=s_1(Y)^2-4s_0(Y)s_2(Y).
\end{equation*}

\textit{Degeneration type III :} This is the case of potential good reduction. For example, using notations of the previous section, let $K:=\Q_2^{\rm ur}((-2)^{1/5})$ and $C_1/K$ be the smooth, projective, geometrically integral curve birationally given by $Y^2=1+X^4+X^5=f_{4,1}(X)$. Then, $C_1/K$ has potential good reduction with maximal wild monodromy $\M/K$, the group $\Gal(\M/K)$ is an extra-special $2$-group of order $2^5$, $f(\Jac(C_1)/K)=9$ and $\sw(\Jac(C_1)/\Q_2^{\rm ur})=1$. \\

\textit{Degeneration type I :}
\begin{prop}\label{proptypeii} Let $\rho:= 2^{2/3}$, $b_2,b_3,b_4 \in \Q_2^{\rm alg}$, $K:=\Q_2^{\rm ur}(b_2,b_3,b_4)$ and $C/K$ be the smooth, projective, geometrically integral curve birationally given by
\begin{equation*}
Y^2=f(X)=1+b_2X^2+b_3X^3+b_4X^4+X^5,
\end{equation*}
with $v(b_i) \geq 0$ and $v(b_3)=0$. Assume that $1+b_3b_2+b_3^2b_4 \not \equiv 0 \mod \pi_K$. Then $C$ has stable reduction of type I and 
\begin{equation*}
T_f(Y)=T_{1,f}(Y)T_{2,f}(Y) \; \; {\rm  with} \; T_{i,f}(Y) \in K[Y],
\end{equation*}
such that $\overline{T_{1,f}}(Y)=Y^4 \in k[Y]$ and $\overline{T_{2,f}}(Y)=Y^4+\overline{b_3}^2 \in k[Y]$. If $T_{1,f}(Y)$ and $T_{2,f}(Y)$ are irreducible over $K$ and define linearly disjoint extensions, then $C/K$ has maximal wild monodromy $\M/K$ with group $\Gal(\M/K) \simeq Q_8 \times Q_8$.
\end{prop}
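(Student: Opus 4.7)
The strategy parallels the theorem of Section~\ref{sec:2}, adapted to the situation of two elliptic components and therefore two parallel copies of the argument. The polynomial $T_f(Y) = s_1(Y)^2 - 4 s_0(Y) s_2(Y)$ plays the role of a modified monodromy polynomial for the $p=2$ hyperelliptic case: its zeros are the ``good'' translation points $y$ such that, after a substitution of the form $X = \rho T + y$ and $Y = \sqrt{f(y)}(1 + W + A(T))$ with $A(T)$ a suitable correction polynomial, the equation $Y^2 = f(X)$ reduces modulo the maximal ideal to $w^2 + w = t^3$. The vanishing $T_f(y) = 0$ is exactly what permits the $T$ and $T^2$ terms of $f(y + \rho T)/f(y)$ to be absorbed into the square on the left, leaving only the cubic term modulo $\pi_K$ (an elliptic curve of genus~$1$).

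The first step is to factor $T_f(Y)$ over $K$. Direct computation yields $\overline{T_f}(Y) = Y^4(Y^4 + \overline{b_3}^2)$ in $k[Y]$, whose two factors are coprime since $\overline{b_3} \neq 0$; Hensel's lemma then produces $T_f = T_{1,f} T_{2,f}$ with the stated reductions. The hypothesis $1 + b_3 b_2 + b_3^2 b_4 \not\equiv 0 \pmod{\pi_K}$ is equivalent to $f(\overline{b_3}^{1/2}) \neq 0$ in $k$, ensuring that the roots $y_2$ of $T_{2,f}$ are not approximate branch points and so that $f(y_2)$ is a unit, making the substitution $Y = \sqrt{f(y_2)}(1 + W + A_2(T))$ well-defined. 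For a root $y_1$ of $T_{1,f}$ (which has $v(y_1) > 0$) and a root $y_2$ of $T_{2,f}$ (which is a unit reducing to $\overline{b_3}^{1/2}$), I would then perform the substitution $X = \rho T + y_i$ with $\rho = 2^{2/3}$ (the scale being forced by the Artin--Schreier balance $v(\rho^3) = v(4)$) together with the completion-of-square $Y = \sqrt{f(y_i)}(1 + W + A_i(T))$ built inductively as in Steps~II--IV of Section~\ref{sec:2}. Verifying that each substitution produces reduction $w^2 + w = t^3$ and invoking \cite{Liu}~10.3.44 identifies the stable reduction as Type~I with two elliptic components meeting at a point (the image of the contracted original component $D_0$).

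The monodromy computation then proceeds as in Step~VI of Section~\ref{sec:2}, applied separately to each elliptic component. The automorphism group $\Aut_k(E, \infty)$ of $E : w^2 + w = t^3$ has $2$-Sylow $Q_8$, generated by the translations $t \mapsto t + a$ for $a \in \F_4$ together with the central involution $w \mapsto w + 1$; since the two stable components are distinguishable (their associated translation polynomials $T_{1,f}$, $T_{2,f}$ have distinct reductions), $\Aut_k(\mathcal{C}_k)^{\#}_1 \simeq Q_8 \times Q_8$. A Galois element sending a root $y_i$ of $T_{i,f}$ to another root $y_i'$ acts on the $i$-th stable component by the translation $t \mapsto t + \overline{(y_i' - y_i)/\rho}$; irreducibility of $T_{i,f}$ combined with the constraint that the image sit inside $Q_8 \times Q_8$ forces $\Gal(K(y_i)/K) \simeq (\Z/2\Z)^2 \simeq Q_8/{\rm Z}(Q_8)$, and the extension $K(y_i, \sqrt{f(y_i)})/K(y_i)$ lifts this to the full $Q_8$. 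Linear disjointness of the extensions cut out by $T_{1,f}$ and $T_{2,f}$ then ensures independence of the two factors, so that $\M := K(y_1, y_2, \sqrt{f(y_1)}, \sqrt{f(y_2)})$ has degree $64$ over $K$ and the injection $\Gal(\M/K) \hookrightarrow Q_8 \times Q_8$ is an isomorphism. The main technical obstacle will lie in the second paragraph: the simultaneous construction of the stable model from two distinct types of translations, and the careful verification that the correction polynomials $A_i$ do produce the expected reduction with no additional blow-up needed between the two elliptic components.
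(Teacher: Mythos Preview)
Your overall strategy matches the paper's, but two ingredients are missing and one is overcomplicated.

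\textbf{The substitution.} The paper does not invoke the inductive construction of Steps II--IV of Section~\ref{sec:2}; instead it completes the square directly. The condition $T_f(y)=0$, i.e.\ $s_1(y)^2=4s_0(y)s_2(y)$, means one can choose square roots with $2s_0(y)^{1/2}s_2(y)^{1/2}=s_1(y)$, and then
\[
f(S+y)=(s_0(y)^{1/2}+s_2(y)^{1/2}S)^2+s_3(y)S^3+s_4(y)S^4+S^5
\]
exactly, with no error term. The change $Y=2W+(s_0(y)^{1/2}+s_2(y)^{1/2}S)$, $S=\rho T$ immediately gives $W^2+(s_0^{1/2}+s_2^{1/2}\rho T)W=s_3(y)T^3+\cdots$, reducing to $w^2-w=t^3$. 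Your multiplicative form $Y=\sqrt{f(y)}(1+W+A(T))$ can be made to work, but no induction is needed: $A$ is simply $(s_2/s_0)^{1/2}S$, a single step.

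\textbf{First gap: the translations are nontrivial.} You assert that $\sigma$ sending $y_i\mapsto y_i'$ acts by $t\mapsto t+\overline{(y_i'-y_i)/\rho}$, but for this to be a \emph{nontrivial} translation you need $v(y_i-y_i')=v(\rho)$ exactly, not merely $\geq v(\rho)$. The paper obtains the lower bound from uniqueness of the stable model (as you would), but the upper bound comes from a discriminant computation: one checks $v(\disc T_f)=16\,v(2)$ using the hypothesis $1+b_3b_2+b_3^2b_4\not\equiv 0$, which forces all the inequalities $v(y_i-y_j)\geq v(\rho)$ (for $i,j$ in the same quartet) to be equalities. This step is essential and absent from your plan; without it, neither the surjection onto $Q_8/{\rm Z}(Q_8)\times Q_8/{\rm Z}(Q_8)$ nor the Galois-ness of $K(y_1)/K$ (via Hensel applied to $T_{1,f}(\rho Y+y_1)$) is justified.

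\textbf{Second gap: the degree of $\M$.} You claim $[\M:K]=64$ directly from linear disjointness, but the hypothesis only gives linear disjointness of $K(y_1)$ and $K(y_5)$, hence $[K(y_1,y_5):K]=16$. Concluding $[\M:K]=64$ would require showing separately that $f(y_1)$ and $f(y_5)$ give independent quadratic extensions of $K(y_1,y_5)$, which you do not address. The paper avoids this by a Frattini argument: the injection $i:\Gal(\M/K)\hookrightarrow Q_8\times Q_8$ followed by the quotient maps to $Q_8\times Q_8/{\rm Z}(Q_8)$ and then to $Q_8/{\rm Z}(Q_8)\times Q_8/{\rm Z}(Q_8)$ is surjective (by the translation count above), and one climbs back using that a subgroup of index $\leq 2$ in a $2$-group contains the Frattini subgroup, hence the relevant kernel, hence the previous map was already onto. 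This forces $i$ to be an isomorphism without ever computing $[\M:K]$ directly. Before this, the paper also shows that $\L:=K(y_1,y_5,f(y_1)^{1/2},f(y_5)^{1/2})$ \emph{equals} $\M$ (not just contains it) by checking that any $\sigma\in\Gal(\L/K)$ fixing $\mathcal{C}_k$ pointwise must fix all $y_i$ and all $f(y_i)^{1/2}$; your plan skips this identification.
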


\begin{proof} Using Maple, one computes $T_f(Y)$ and reduces it mod $2$. The statement about $T_f(Y)$ follows from Hensel's lemma. 

Let $y$ be a root of $T_f(Y)$. Define $\rho T=S=X-y$ and choose $s_0(y)^{1/2}$ and $s_2(y)^{1/2}$ such that $2s_0(y)^{1/2}s_2(y)^{1/2}=s_1(y)$. Then 
\begin{align*}
f(S+y)&=s_0(y)+s_1(y)S+s_2(y)S^2+s_3(y)S^3+s_4(y)S^4+S^5\\
      &=(s_0(y)^{1/2}+s_2(y)^{1/2}S)^2+s_3(y)S^3+s_4(y)S^4+S^5\\
      &=(s_0(y)^{1/2}+s_2(y)^{1/2}\rho T)^2+s_3(y)\rho^3 T^3+s_4(y)\rho^4 T^4+\rho^5 T^5.
\end{align*}
The change of variables
\begin{equation*}
\rho T=S=X-y \; \; {\rm and } \; \; Y=2W+(s_0(y)^{1/2}+s_2(y)^{1/2}S),
\end{equation*}
induces
\begin{equation*}
W^2+(s_0(y)^{1/2}+s_2(y)^{1/2}S)W=s_3(y)T^3+s_4(y)\rho T^4+ \rho^2 T^5,
\end{equation*}
which is an equation of a quasi-projective flat scheme over $K(y,f(y)^{1/2})^{\circ}$ with special fiber given by $w^2-w=t^3$.

Let $(y_i)_{i=1,\dots,4}$ (resp. $(y_i)_{i=5,\dots,8}$) be the roots of $T_{1,f}(Y)$ (resp. $T_{2,f}(Y)$). Then, for any $i \in \lbrace 1, \dots,4 \rbrace$ and $j \in \lbrace 5, \dots ,8\rbrace$, the above computations show that $C$ has stable reduction over $\L:=K(y_i,y_j,f(y_i)^{1/2},f(y_j)^{1/2})$ (use \cite{Liu} 10.3.44). Moreover two distinct roots of $T_{1,f}(Y)$ (resp. $T_{2,f}(Y)$) induce equivalent Gauss valuations on $K(C)$, else it would contradict the uniqueness of the stable model. In particular,
\begin{align*}
\label{ineg}
v(y_i-y_j)\geq v(\rho),& \; \; i \neq j \in \lbrace 1,2,3,4\rbrace \; \; {\rm or} \; \; i \neq j \in \lbrace 5,6,7,8 \rbrace, \tag{7}\\
v(y_i-y_j)=0, &\; \; i \in \lbrace 1,2,3,4\rbrace \; \; {\rm and} \; \; j \in \lbrace 5,6,7,8 \rbrace,
\end{align*}
which implies that $v(\disc(T_{1,f}(Y)))+v(\disc(T_{2,f}(Y)))=v(\disc(T_f(Y)))$ and $\disc(T_{i,f}(Y)) \geq 12v(\rho)=8v(2)$ for $i=1,2$. These are equalities if and only if \eqref{ineg} are all equalities. Using Maple, one has
\begin{equation*}
2^{-16}\disc(T_f(Y))=b_3^8(1+b_3b_2+b_3^2b_4)^4 \mod 2,
\end{equation*}
thus one gets that \eqref{ineg} are all equalities, therefore 
\begin{equation*}
0, \frac{y_2-y_1}{\rho},\frac{y_3-y_1}{\rho},\frac{y_4-y_1}{\rho},
\end{equation*}
are all distinct mod $\pi_K$. Applying Hensel's lemma to $T_{1,f}(\rho Y+y_1)$ shows that $K(y_1)/K$ is Galois. The same holds for $K(y_5)/K$.

Let's denote by $E_1/k$ and $E_2/k$ (resp. $\infty_1$ and $\infty_2$) the genus $1$ curves in the stable reduction of $C$ (resp. their crossing points with $D_0$). The group $\Aut_k(\mathcal{C}_k)^{\#} \simeq \Aut_{k,\infty_1}(E_1) \times \Aut_{k,\infty_2}(E_2) $ has a unique $2$-Sylow subgroup isomorphic to $\Syl_2(\Aut_{k,\infty_1}(E_1)) \times \Syl_2(\Aut_{k,\infty_2}(E_2)) $ where $\Aut_{k,\infty_i}(E_i) \simeq \Sl_2(\F_3)$ denotes the subgroup of $\Aut_k(E_i)$ leaving $\infty_i$ fixed. 

\indent First, we show that $\L/K$ is the monodromy extension $\M/K$ of $C/K$. Let $\sigma \in \Gal(\L/K)$ inducing the identity on $\mathcal{C}_k/k$. We show that $\forall i \in \lbrace 1, \dots , 8\rbrace$, $\sigma(y_i)=y_i$. Otherwise, since $\sigma$ is an isometry, $\sigma(y_1) \notin \lbrace y_5,\dots, y_8\rbrace$ and we can assume that $\sigma(y_1)=y_2$. So :
\begin{equation*}
\sigma(\frac{X-y_1}{\rho}) = \frac{X-y_1}{\rho}+\frac{y_1-y_2}{\rho},
\end{equation*}
so $\sigma$ does not induce the identity on $\mathcal{C}_k/k$. If $\sigma(s_0(y_i)^{1/2})=-s_0(y_i)^{1/2}$ for some $i$ then $\sigma(s_2(y_i)^{1/2})=-s_2(y_i)^{1/2}$ and
\begin{equation*}
\sigma(W)-W=s_0(y_i)^{1/2}+s_2(y_i)^{1/2} \rho T,
\end{equation*}
therefore $\sigma$ acts non trivially on $\mathcal{C}_k/k$. It implies that for all $ i \in \lbrace 1, \dots , 8 \rbrace$, $\sigma(s_0(y_i)^{1/2})=s_0(y_i)^{1/2}$ and $\sigma=\Id$. Since $\M \subseteq \L$, this shows that $\L = \M$.

Now, we show that the wild monodromy is maximal assuming that $T_{1,f}(Y)$ and $T_{2,f}(Y)$ are irreducible over $K$ and define linearly disjoint extensions. One has natural morphisms :
\begin{equation*}
\Gal(\M/K) \overset{i}{\hookrightarrow} Q_8\times Q_8 \overset{p}{\longrightarrow} Q_8 \times Q_8/{\rm Z}(Q_8)  \overset{q}{\longrightarrow} Q_8/{\rm Z}(Q_8)\times Q_8/{\rm Z}(Q_8).
\end{equation*}
For any $i \in \lbrace 1, \dots ,4 \rbrace$ and $j \in \lbrace 5, \dots, 8 \rbrace$, $(i,j) \neq (1,5)$, there exists $\sigma_{i,j} \in \Gal(\M/K)$ such that :
\begin{equation*}
\sigma_{i,j}(y_1)=y_i \; \; {\rm and} \; \; \sigma_{i,j}(y_5)=y_j,
\end{equation*}
which is seen to act non trivially on $\mathcal{C}_k/k$. The composition $q\circ p \circ i$ is then surjective, it implies that $p \circ i(\Gal(\M/K))$ is a subgroup of $Q_8 \times Q_8/{\rm Z}(Q_8)$ of index at most $2$ so it contains $\Phi(Q_8 \times Q_8/{\rm Z}(Q_8))={\rm Z}(Q_8) \times \lbrace 1 \rbrace = \Ker q$. It follows that $p \circ i$ is onto and $i(\Gal(\M/K))$ is a subgroup of $Q_8 \times Q_8$ of index at most $2$ so it contains $\Phi(Q_8\times Q_8)={\rm Z}(Q_8) \times {\rm Z}(Q_8) \supseteq \Ker p$. Finally, one has $i(\Gal(\M/K)) = Q_8 \times Q_8$.
\end{proof}

\textbf{Example :} Let $f_0(X):=1+2^{3/5}X^2+X^3+2^{2/5}X^4+X^5$ and ${K:=\Q_2^{\rm ur}(2^{\frac{1}{15}})}$. Then, one checks using Magma that $T_{f_0}(Y)=T_{1,f_0}(Y)T_{2,f_0}(Y)$  where $T_{1,f_0}(Y)$ and $T_{2,f_0}(Y)$ are irreducible polynomials over $K$ and $T_{2,f_0}(Y)$ is irreducible over the decomposition field of $T_{1,f_0}(Y)$. So, the curve $C_0/K$ defined by $Y^2=f_0(X)$ has maximal wild monodromy $\M/K$ with group $\Gal(\M/K) \simeq Q_8 \times Q_8$.
\begin{verbatim}
q2:= pAdicField(2,8);
q2x<x>:=PolynomialRing(q2);
k<pi>:=TotallyRamifiedExtension(q2,x^15-2);
K<rho>:=UnramifiedExtension(k,8);
Ky<y>:=PolynomialRing(K);
b3:=1;
b2:=pi^9;
b4:=pi^6;
T:=(2*b2*y+3*b3*y^2+4*b4*y^3+5*y^4)^2-
4*(1+b2*y^2+b3*y^3+b4*y^4+y^5)*(b2+3*b3*y+6*b4*y^2+10*y^3);
F,a,A:=Factorization(T: Extensions:= true);
Degree(F[1][1]);Degree(F[2][1]);
L1:=A[1]`Extension;
L1Y<Y>:=PolynomialAlgebra(L1);
TY:=L1Y!Eltseq(T);
G:=Factorization(TY);
G[1][2];G[2][2];G[3][2];G[4][2];G[5][2];
Degree(G[5][1]);
\end{verbatim}
This has the following consequence for the Inverse Galois Problem :

\begin{coro}
With the notations of the above example, let $(y_i)_{i=1,\dots,8}$ be the roots of $T_{f_0}(Y)$ and $\M=K(y_1,\dots, y_8,f_0(y_1)^{1/2},\dots,f_0(y_8)^{1/2})$. Then $\M/K$ is Galois with Galois group isomorphic to $Q_8 \times Q_8 \simeq \Syl_2(\Aut_k(\mathcal{C}_k)^{\#})$.
\end{coro}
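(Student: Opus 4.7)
The plan is to observe that the corollary is essentially the specialization of Proposition \ref{proptypeii} to the explicit polynomial $f_0(X)=1+2^{3/5}X^2+X^3+2^{2/5}X^4+X^5$, once the hypotheses of that proposition have been checked computationally. So the first task is to verify the input hypotheses of Proposition \ref{proptypeii} for $f_0$: with $b_2=2^{3/5}$, $b_3=1$, $b_4=2^{2/5}$, one has $v(b_i)\geq 0$, $v(b_3)=0$, and $1+b_3b_2+b_3^2b_4=1+2^{3/5}+2^{2/5}\equiv 1\pmod{\pi_K}$, so it is a unit mod $\pi_K$. Hence the stable reduction of $C_0/K$ is of type $I$ and $T_{f_0}(Y)$ factors as $T_{1,f_0}(Y)T_{2,f_0}(Y)$ with the two factors reducing to $Y^4$ and $Y^4+1$ over $k$ respectively.

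Next I would invoke the Magma output to check the remaining two conditions of Proposition \ref{proptypeii}: that $T_{1,f_0}(Y)$ and $T_{2,f_0}(Y)$ are irreducible over $K$ and that they define linearly disjoint extensions of $K$. The displayed code computes the factorization of $T_{f_0}(Y)$ over $K$ and prints the degrees of the two factors (both equal to $4$), so both $T_{i,f_0}(Y)$ are irreducible over $K$. It then constructs the extension $L_1$ generated by a root of $T_{1,f_0}(Y)$ and factors $T_{2,f_0}(Y)$ over $L_1$; reading off that the factor $G[5][1]$ has degree $4$ (or equivalently that all factors $G[i][2]$ of multiplicity $1$ correspond to one degree-$4$ irreducible piece), one concludes that $T_{2,f_0}(Y)$ remains irreducible over the splitting field of $T_{1,f_0}(Y)$, i.e.\ the two splitting fields are linearly disjoint over $K$.

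Having verified all the hypotheses, Proposition \ref{proptypeii} applies directly and yields that $C_0/K$ has maximal wild monodromy with monodromy extension exactly $\M=K(y_1,\dots,y_8,f_0(y_1)^{1/2},\dots,f_0(y_8)^{1/2})$ (this is the extension $\L=\M$ identified in the proof of the proposition) and $\Gal(\M/K)\simeq Q_8\times Q_8$. In particular $\M/K$ is Galois. The final identification $Q_8\times Q_8\simeq\Syl_2(\Aut_k(\mathcal{C}_k)^\#)$ is just the geometric description recorded in the proof of Proposition \ref{proptypeii}: because the stable reduction is of type $I$ with two elliptic components $E_1,E_2$ birational to $w^2-w=t^3$ meeting $D_0$ at points $\infty_1,\infty_2$, one has $\Aut_k(\mathcal{C}_k)^\#\simeq\Aut_{k,\infty_1}(E_1)\times\Aut_{k,\infty_2}(E_2)\simeq\mathrm{Sl}_2(\F_3)\times\mathrm{Sl}_2(\F_3)$, whose $2$-Sylow subgroup is $Q_8\times Q_8$.

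The only genuine content beyond invoking the proposition is the Magma verification; the main (indeed the only) obstacle is the linear disjointness check, since irreducibility of each $T_{i,f_0}$ alone would give $\Gal(\M/K)$ of order only at most $64$ rather than the full $128$, and one needs the degree of $T_{2,f_0}$ over the splitting field of $T_{1,f_0}$ to remain $4$ to force surjectivity of the composition $q\circ p\circ i$ in the proof of Proposition \ref{proptypeii}. All other steps are either definitional or immediate from the proposition.
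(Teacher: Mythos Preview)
Your approach is correct and is exactly what the paper does: the corollary is presented without proof as an immediate consequence of the example paragraph, which verifies (via Magma) the irreducibility and linear-disjointness hypotheses of Proposition~\ref{proptypeii}, and then the proposition supplies the conclusion. One small slip in your final explanatory paragraph: $|Q_8\times Q_8|=64$, not $128$, so the comparison you intend is that without linear disjointness one would only bound $|\Gal(\M/K)|$ by something strictly less than $64$ (e.g.\ $32$), rather than obtain the full $64$.
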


We now give results about the arithmetic of the monodromy extension of the previous example.
\begin{center}
\begin{tikzpicture}[scale=1.5,xscale=1.5]
\node (M) at (0,1) { $\M$};
\draw (-0.1,0.8)--(-0.8,0.2);
\draw (+0.1,0.8)--(+0.8,0.2);
\node (K1) at (-1,0) { $K_1:=K(y_1,f_0(y_1)^{1/2})$};
\node (K2) at (1,0) { $K_2:=K(y_5,f_0(y_5)^{1/2})$};
\draw (-0.1,-0.8)--(-0.8,-0.2);
\draw (+0.1,-0.8)--(+0.8,-0.2);
\node (K) at (0,-1) {$K:=\Q_2^{\rm ur}(2^{1/5})(2^{1/3})$};
\draw (0,-1.2)--(0,-1.8);
\node (Q2) at (0,-2) {$\Q_2^{\rm ur}(2^{1/5})$};
\end{tikzpicture}
\end{center}

\indent First of all, $\M/\Q_2^{\rm ur}(2^{1/5})$ is the monodromy extension of $C_0/\Q_2^{\rm ur}(2^{1/5})$. Indeed, let $\theta$ be a primitive cube root of unity. The curve $C_0$ has a stable model over $\M$ and $\sigma \in \Gal(K/ \Q_2^{\rm ur}(2^{1/5}))$ defined by $\sigma(2^{1/3})=\theta 2^{1/3}$ acts non trivially on the stable reduction by $t \mapsto \overline{\theta} t$ . It implies that 
\begin{equation*}
\Gal(\M/\Q_2^{\rm ur}(2^{1/5})) \hookrightarrow \Sl_2(\F_3) \times \Sl_2(\F_3). 
\end{equation*}
Moreover, $\M/K_1$ is Galois with group isomorphic to $Q_8$. Indeed, from the proof of proposition \ref{proptypeii}, since $\Gal(M/K_1)$ acts trivially on one of the two elliptic curves of the stable reduction, one has the injection :
\begin{equation*}
\Gal(M/K_1) \hookrightarrow Q_8 \times \lbrace \Id \rbrace.
\end{equation*}
 Moreover the image of this injection is mapped onto $Q_8/{\rm Z}(Q_8)$ so $\Gal(M/K_1) \simeq Q_8$. The extensions $K_1/K$ and $K_2/K$ being linearly disjoint. It implies that $\Gal(K_2/K) \simeq Q_8$ and it follows that $\Gal(K_2/\Q_2^{\rm ur}(2^{1/5})) \simeq \Sl_2(\F_3)$. Similarly, $\Gal(K_1/\Q_2^{\rm ur}(2^{1/5})) \simeq \Sl_2(\F_3)$.

 \indent This has consequences on the possible ramification subgroups arising in the filtrations of ${}_{1}G:=\Gal(K_1/K)$ and ${}_{2}G:=\Gal(K_2/K)$. Namely there are no subgroups of order $4$, otherwise there would be a normal subgroup of order $4$ in $ \Sl_2(\F_3)$. So the possible subgroups arising in the ramification filtrations of ${}_{1}G$ and ${}_{2}G$ are $Q_8$, ${\rm Z}(Q_8)$ and $\lbrace 1 \rbrace$.

\indent Using Magma (see \cite{Web}) one computes the lower ramification filtrations
\begin{align*}
{}_{1}G=({}_{1}G)_0=({}_{1}G)_1 & \supsetneq {\rm Z}({}_{1}G) =({}_{1}G)_2=({}_{1}G)_3 \supsetneq \lbrace 1 \rbrace,\\
{}_{2}G=({}_{2}G)_0= \dots =({}_{2}G)_5 & \supsetneq {\rm Z}({}_{2}G) =({}_{2}G)_6= \dots = ({}_{2}G)_{69} \supsetneq \lbrace 1 \rbrace . 
\end{align*}
In order to compute the lower ramification filtration of $\Gal(\M/K)$, we now determine its upper ramification filtration since it enjoys peculiar arithmetic properties. Using lemma 3.5 of \cite{Kid} and the expressions of $\varphi_{K_1/K}$ and $\varphi_{K_2/K}$, one sees that $K_1/K$ and $K_2/K$ are \textsl{arithmetically disjoint}. According to \cite{Yam} theorem 3, one has for any  $u \in \R$ :
\begin{equation*}
\Gal(\M/K)^u \simeq {}_{1}G^u \times {}_{2}G^u.
\end{equation*}
So one gets : 
\[
 \Gal(\M/K)^u \simeq  \left\{ 
				\begin{array}{ll}
				 {}_{1}G\times {}_{2}G,& -1 \leq u \leq 1,\\
                 {\rm Z}({}_{1}G) \times {}_{2}G,& 1 < u \leq 3/2,  \\
                 \lbrace 1 \rbrace \times {}_{2}G,& 3/2 < u \leq 5,\\
				 \lbrace 1 \rbrace \times {\rm Z}({}_{2}G), & 5 < u \leq 21,\\
				 \lbrace 1 \rbrace \times \lbrace 1 \rbrace, & 21 < u.
                \end{array}
           \right.
\]
One deduces the lower ramification filtration of $\Gal(\M/K)$ :
\[
 \Gal(\M/K)_i \simeq  \left\{ 
				\begin{array}{ll}
				 {}_{1}G\times {}_{2}G,& -1 \leq i \leq 1,\\
                 {\rm Z}({}_{1}G) \times {}_{2}G,& 2 \leq i \leq 3,  \\
                 \lbrace 1 \rbrace \times {}_{2}G,& 4 \leq i \leq 31,\\
				 \lbrace 1 \rbrace \times {\rm Z}({}_{2}G), & 32 \leq  i \leq 543,\\
				 \lbrace 1 \rbrace \times \lbrace 1 \rbrace, & 544 \leq  i.
                \end{array}
           \right.
\]
Let denote the genus $1$ irreducible components of $\mathcal{C}_k/k$ by $E_1$ and $E_2$. Let $H_1$ (resp. $H_2$) be a finite subgroup of $\Syl_2(\Aut_{k,\infty_1}(E_1))$ (resp. $\Syl_2(\Aut_{k,\infty_2}(E_2))$) and $\ell \neq 2$ be a prime number. One has :
\begin{equation*}
\Pic^{\circ}(\mathcal{C}_k)[\ell]^{H_1 \times H_2} = \Pic^{\circ}(E_1)[\ell]^{H_1} \times \Pic^{\circ}(E_2)[\ell]^{H_2}.
\end{equation*}
According to lemma \ref{guralnick} one has $\dim_{\F_{\ell}} \Pic^{\circ}(E_i)[\ell]^{H_i} = 2g(E_i/H_i)$. It follows that  $\sw(\Jac(C_0)/K)=45$.\\

\textit{Degeneration type II :}
\begin{prop}\label{proptypeiii}
Let $a^9=2$, $K:=\Q_2^{\rm ur}(a)$, $\rho:=a^4$ and $C/K$ be the smooth, projective, geometrically integral curve birationally given by 
\begin{equation*}
Y^2=f(X)=1+a^3X^2+a^6X^3+X^5.
\end{equation*}
Then, $C$ has stable reduction of type II and $C/K$ has maximal wild monodromy $\M/K$ with group $\Gal(\M/K) \simeq (Q_8\times Q_8)\rtimes \Z/2\Z$.
\end{prop}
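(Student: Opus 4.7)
The plan is to follow the template of Proposition \ref{proptypeii}, with the essential modification that reflects the type II geometry: I expect the polynomial $T_f(Y)$ associated with $f(X) = 1 + a^3 X^2 + a^6 X^3 + X^5$ to be \emph{irreducible} over $K$ (rather than factoring into two quartics as in type I), so that the Galois group $\Gal(\mathcal{L}/K)$ of $\mathcal{L} := K(y_1, \dots, y_8)$ already contains an involution mixing the two clusters of roots that govern the two elliptic components of the stable reduction. This is what will eventually produce the semidirect factor $\Z/2\Z$. Concretely, using Magma (cf.\ \cite{Web}) I would compute $T_f(Y)$, verify it is irreducible over $K$, and check that its eight roots arrange into two clusters of four with $v(y_i - y_j) = v(\rho) = v(a^4)$ inside a cluster.

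Next, for each root $y$ of $T_f(Y)$, the same change of variables as in Proposition \ref{proptypeii}, namely $\rho T = X - y$ and $Y = 2W + (s_0(y)^{1/2} + s_2(y)^{1/2} \rho T)$, produces a model whose special fiber is the elliptic curve $w^2 - w = t^3$, and \cite{Liu} 10.3.44 gives a stable model of type II over $\M := K(y_1, \dots, y_8, f(y_1)^{1/2}, \dots, f(y_8)^{1/2})$. The trivial-action argument from the proof of Proposition \ref{proptypeii} adapts verbatim: any $\sigma \in \Gal(\M/K)$ acting trivially on $\mathcal{C}_k$ must fix each $y_i$ (else translation by $(y_i - \sigma(y_i))/\rho$ would move $T$) and then each $f(y_i)^{1/2}$ (else the $W$-coordinate would be shifted), so $\sigma = \Id$; hence $\M/K$ is the monodromy extension.

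To identify $\Gal(\M/K)$, I split the eight roots into the two clusters $\mathcal{O}_1, \mathcal{O}_2$ corresponding to the two elliptic components $E_1, E_2$ of $\mathcal{C}_k$. Within each cluster, the four translations $T \mapsto T + \overline{(y_j - y_i)/\rho}$ survive in $\Aut_k(\mathcal{C}_k)^\#$ and, exactly as in the proof of Proposition \ref{proptypeii}, a Frattini-subgroup argument forces the stabilizer in $\Gal(\M/K)$ of the opposite cluster to surject onto $\mathrm{Syl}_2(\Aut_{k,\infty_i}(E_i)) \simeq Q_8$. The two $Q_8$'s have trivial intersection, and the Galois involution of $\Gal(\mathcal{L}/K)$ swapping $\mathcal{O}_1$ with $\mathcal{O}_2$ lifts to an element of $\Gal(\M/K)$ that exchanges them; counting orders yields $\Gal(\M/K) \simeq (Q_8 \times Q_8) \rtimes \Z/2\Z$. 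Since in type II the two isomorphic components $E_1, E_2$ are exchanged by an automorphism fixing $D_0$ pointwise, the 2-Sylow of $\Aut_k(\mathcal{C}_k)^\#$ is also $(Q_8 \times Q_8) \rtimes \Z/2\Z$, so the wild monodromy is maximal.

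The main obstacle is verifying the precise factorization and ramification behaviour of $T_f(Y)$ and, closely related, confirming that the Galois involution mixing the two clusters of roots really does descend to the swap of the two elliptic components (rather than being absorbed into the $Q_8 \times Q_8$ part acting internally on each). This demands careful tracking of the change of variables under that involution, using that both clusters produce the same model $w^2 - w = t^3$ up to symmetric relabelling; the bookkeeping is somewhat more delicate than in the type I case, where the factorization $T_f = T_{1,f} T_{2,f}$ already separated the two components at the arithmetic level.
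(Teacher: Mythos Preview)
Your overall plan matches the paper's proof closely: irreducibility of $T_f(Y)$ over $K$, the two clusters of four roots detected via Newton polygons, the same change of variables producing $w^2-w=t^3$ on each component, and the identification $\M=K(y_1,\dots,y_8,f(y_1)^{1/2},\dots,f(y_8)^{1/2})$ by the trivial-action argument. Two points need correction.

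\textbf{The group-theoretic step is incomplete.} You argue that the cluster-preserving subgroup surjects onto each $Q_8$ factor and then assert that ``the two $Q_8$'s have trivial intersection'' so that ``counting orders'' gives $Q_8\times Q_8$. But surjecting onto each factor of a product does \emph{not} imply surjecting onto the product: the diagonal copy of $Q_8$ inside $Q_8\times Q_8$ already surjects onto both factors. What is actually needed is surjectivity onto $Q_8/{\rm Z}(Q_8)\times Q_8/{\rm Z}(Q_8)$ \emph{simultaneously}, i.e.\ for every pair $(i,j)$ with $y_i\in\mathcal{O}_1$, $y_j\in\mathcal{O}_2$ an element $\sigma$ with $\sigma(y_1)=y_i$ and $\sigma(y_5)=y_j$. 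The paper obtains this from an additional Magma check you did not list: over $K(y_1)$ one has $T_f(Y)=\prod_{i=1}^4(Y-y_i)\,T_2(Y)$ with $T_2$ \emph{irreducible} of degree $4$, and $T_2$ splits over $K(y_1,y_5)$. This gives transitivity of the cluster-preserving subgroup on $\mathcal{O}_1\times\mathcal{O}_2$, hence surjectivity of $q\circ p$ in the chain
\[
H \;\overset{p}{\longrightarrow}\; Q_8\times Q_8/{\rm Z}(Q_8)\;\overset{q}{\longrightarrow}\; Q_8/{\rm Z}(Q_8)\times Q_8/{\rm Z}(Q_8),
\]
after which two successive Frattini arguments (index $\le 2$ forces containment of $\Phi$) yield $H=Q_8\times Q_8$, and a further Frattini step with the element $\tau$ sending $y_1\mapsto y_5$ gives the full $(Q_8\times Q_8)\rtimes\Z/2\Z$.

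\textbf{Minor gap in the trivial-action argument.} When $\sigma(y_1)$ lies in the \emph{other} cluster, the quantity $(y_1-\sigma(y_1))/\rho$ is not integral (here $v(y_1-y_5)=v(2)/3<v(\rho)$), so your ``translation moves $T$'' reasoning does not apply. The paper handles this case separately: such a $\sigma$ exchanges the two genus-$1$ components of $\mathcal{C}_k$, which is again a non-trivial action. You hint at this in your final paragraph, but it should appear in the argument that $\M=\L$, not only later.
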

\begin{proof}
Using Magma, one determines the Newton polygon of $T_f(Y)$. Then, $T_f(Y)$ has $8$ roots $(y_i)_{i=1,\dots,8}$ of valuation $\frac{7}{24}v(2)$. By considering the Newton polygon of $\Delta(Z)=(T_f(Z+y_1)-T_f(y_1))/Z$, one shows that $\Delta(Z)$ has $3$ roots (say $y_2-y_1$, $y_3-y_1$ and $y_4-y_1$) of valuation $v(\rho)$ and $4$ roots of valuation $v(2)/3$.

Let $y$ be a root of $T_f(Y)$. Define $\rho T=S=X-y$ and choose $s_0(y)^{1/2}$ and $s_2(y)^{1/2}$ such that $2s_0(y)^{1/2}s_2(y)^{1/2}=s_1(y)$. Then the change of variables
\begin{equation*}
\rho T=S=X-y \; \; {\rm and } \; \; Y=2W+(s_0(y)^{1/2}+s_2(y)^{1/2}S),
\end{equation*}
induces
\begin{equation*}
W^2+(s_0(y)^{1/2}+s_2(y)^{1/2}S)W=\frac{s_3(y)\rho^3}{4}T^3+\frac{s_4(y)\rho^4}{4} T^4+ \frac{\rho^5}{4} T^5,
\end{equation*}
which is an equation of a quasi-projective flat scheme over $K(y,f(y)^{1/2})$ with special fiber given by $w^2-w=t^3$. The same argument as in the degeneration type I shows that $C$ has stable reduction of type II over $\L=K(y_1,\dots, y_8,f(y_1)^{1/2},\dots,f(y_8)^{1/2})$.

\indent We first show that $\L/K$ is the monodromy extension $\M/K$ of $C/K$. Let $\sigma \in \Gal(\L/K)$ inducing the identity on $\mathcal{C}_k/k$. We show that $\forall i \in \lbrace 1, \dots , 8\rbrace$, $\sigma(y_i)=y_i$. Else, for example, $\sigma(y_1)=y_2$ or $\sigma(y_1)=y_5$. It follows from the properties of the roots of $\Delta(Z)$ that, if $\sigma(y_1)=y_2$ then $\sigma$ acts by non trivial translation on $\mathcal{C}_k/k$ and if $\sigma(y_1)=y_5$ then $\sigma$ acts on $\mathcal{C}_k/k$ by permuting the genus $1$ components. Once again, the same computations as in the degeneration type I show that $\forall i \in \lbrace 1, \dots,8 \rbrace$, $\sigma(f(y_i)^{1/2})=f(y_i)^{1/2}$. Since $\M \subseteq \L$, one gets $\M = \L$.

\indent Now, we show that the wild monodromy is maximal. Let's consider the canonical morphism :
\begin{equation*}
\Gal(\M/K) \overset{i}{\hookrightarrow} \Syl_2(\Aut_k(\mathcal{C}_k)^{\#}) \simeq (Q_8\times Q_8) \rtimes \Z /2\Z.
\end{equation*}
One sees $Q_8\times Q_8$ as the subgroup $(Q_8\times Q_8)\rtimes \lbrace 1 \rbrace$ of $(Q_8\times Q_8) \rtimes \Z /2\Z$. Set $ H:=i(\Gal(\M/K)) \cap (Q_8 \times Q_8)$. One has natural morphisms :
\begin{equation*}
H \overset{p}{\longrightarrow} Q_8 \times Q_8/{\rm Z}(Q_8) \overset{q}{\longrightarrow} Q_8/{\rm Z}(Q_8)\times Q_8/{\rm Z}(Q_8).
\end{equation*}
Using Magma (see \cite{Web}) one shows that $T_f(Y)$ is irreducible over $K$ and over $K(y_1)$ one has the following decomposition in irreducible factors :
\begin{equation*}
T_f(Y)=\prod_{i=1}^4(Y-y_i)T_2(Y),
\end{equation*}
and $T_2(Y)$ decomposes over $K(y_1,y_5)$. It implies that $q\circ p \circ i$ is surjective and $p(H)$ is a subgroup of index at most $2$ so it contains $\Phi( Q_8\times Q_8/{\rm Z}(Q_8))$ and as for type I, one has $p(H) = Q_8\times Q_8/{\rm Z}(Q_8)$. It implies that $H$ is a subgroup of $Q_8 \times Q_8$ of index at most $2$ and again $H=Q_8 \times Q_8$, that is $Q_8 \times Q_8 \subseteq i(\Gal(\M/K))$. Finally on has a natural morphism :
\begin{equation*}
(Q_8\times Q_8) \rtimes \Z /2\Z \overset{r}{\longrightarrow} (Q_8/{\rm Z}(Q_8) \times Q_8/{\rm Z}(Q_8)) \rtimes \Z /2\Z.
\end{equation*}
The composition $r \circ i$ is surjective since there exist $\sigma, \tau \in \Gal(\M/K)$ such that for $i \in \lbrace 1, \dots ,4 \rbrace$ and $j \in \lbrace 5, \dots ,8 \rbrace$
\begin{align*}
&\sigma(y_1)=y_i \; \; {\rm and} \; \; \sigma(y_5)=y_j,\\
&\tau(y_1)=y_5.
\end{align*}
Since the index of $i(\Gal(\M/K))$ in $(Q_8\times Q_8) \rtimes \Z /2\Z $ is at most $2$,  this group contains $\Phi((Q_8\times Q_8) \rtimes \Z /2\Z) \supseteq \Ker r$ so $i(\Gal(\M/K))=(Q_8\times Q_8) \rtimes \Z /2\Z$.
\end{proof}
Again we derive the following result for the Inverse Galois Problem :
\begin{coro}
With the notations of Proposition \ref{proptypeiii}, let $(y_i)_{i=1,\dots,8}$ be the roots of $T_{f}(Y)$ and $\M=K(y_1,\dots, y_8,f(y_1)^{1/2},\dots,f(y_8)^{1/2})$. Then $\M/K$ is Galois with Galois group isomorphic to $(Q_8 \times Q_8)\rtimes \Z / 2 \Z \simeq \Syl_2(\Aut_k(\mathcal{C}_k)^{\#})$.
\end{coro}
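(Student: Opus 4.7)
The plan is to deduce the corollary essentially as a bookkeeping consequence of Proposition~\ref{proptypeiii}, which has already done the substantive work. By the stable reduction theorem (Theorem~\ref{stableth}), the monodromy extension $\M/K$ is Galois over $K$, so the whole corollary reduces to identifying $\M$ with the explicit compositum $\L := K(y_1,\dots,y_8,f(y_1)^{1/2},\dots,f(y_8)^{1/2})$ and then invoking the group-theoretic description already in hand.

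For $\M \subseteq \L$, I would reuse the change of variables $\rho T = X - y_i$ together with $Y = 2W + s_0(y_i)^{1/2} + s_2(y_i)^{1/2} S$ exhibited in the proof of Proposition~\ref{proptypeiii}: this produces, over each $K(y_i, f(y_i)^{1/2})$, an integral model whose special fiber is $w^2 - w = t^3$, and Liu 10.3.44 combined with the knowledge of the degeneration type ensures this is a stable model. Hence $C_\L$ has stable reduction, and by minimality $\M \subseteq \L$.

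For the reverse inclusion $\L \subseteq \M$, I would rerun the argument already made in the proof: any $\sigma \in \Gal(\L/K)$ acting trivially on $\mathcal{C}_k/k$ must fix each $y_i$ (else the Newton polygon analysis of $T_f(Y)$ and of $\Delta(Z)$ forces $\sigma$ either to translate $T$ by a nonzero reduction $\overline{(y_i-y_j)/\rho}$ on one of the two elliptic components or to swap the two components) and must fix each square root $f(y_i)^{1/2}$ (else $\sigma(W)-W$ has nonzero reduction $\overline{s_0(y_i)^{1/2}+s_2(y_i)^{1/2}\rho T}$). By the characterization of $\Gal(K^{\rm alg}/\M)$ following Theorem~\ref{stableth}, this gives $\L \subseteq \M$.

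With $\M = \L$ established, the isomorphism $\Gal(\M/K) \simeq (Q_8 \times Q_8) \rtimes \Z/2\Z \simeq \Syl_2(\Aut_k(\mathcal{C}_k)^{\#})$ is immediate from Proposition~\ref{proptypeiii} combined with the maximal wild monodromy conclusion. A minor point worth checking en route is that $\L$ is a priori Galois over $K$; but this is automatic, since $\{y_1,\dots,y_8\}$ is the full root set of the $K$-polynomial $T_f(Y)$ and $\{\pm f(y_i)^{1/2}\}$ is visibly stable under $\Gal(K^{\rm alg}/K)$. The main obstacle, namely the two containments above, has already been overcome inside Proposition~\ref{proptypeiii}, so the corollary is essentially a clean repackaging emphasizing the inverse Galois application.
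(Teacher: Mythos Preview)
Your proposal is correct and matches the paper's approach: the corollary is stated without a separate proof because the identification $\M=\L$ and the isomorphism $\Gal(\M/K)\simeq(Q_8\times Q_8)\rtimes\Z/2\Z$ are both established inside the proof of Proposition~\ref{proptypeiii}, exactly via the two containments and the maximality argument you outline. Your only addition is the remark that $\L/K$ is a priori Galois, which is harmless but unnecessary once $\L=\M$ is known.
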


\noindent \textbf{Remark.} Throughout this paper, we have described monodromy extensions as decomposition fields of explicit polynomials being $p$-adic approximations of the so called \textsl{monodromy polynomial} of \cite{LM2}. The point is that the roots of the monodromy polynomial are the centers of the blowing-ups giving the stable reduction of a $p$-cyclic cover of $\Proj_K$ with equidistant geometry. For a given genus, the expression of the monodromy polynomial is somehow generic, making it quite complicated. Since $p$-adically close polynomials with same degrees define the same extensions, it was natural to drop terms having a small $p$-adic contribution in our examples to obtain modified monodromy polynomials easier to handle than the actual monodromy polynomial.

\bibliographystyle{alpha} 
\bibliography{maxmonoutf}

\end{document}